\pgfplotsset{compat=1.11}
\numberwithin{equation}{section}
\newcommand{\emb}[1]{\textcolor{black}{#1}}
\theoremstyle{definition}
\newtheorem*{definition}{Definition}
\newtheorem{remark}{Remark}
\newtheorem*{main theorem}{Main Theorem}
\newtheorem{problem}{Problem}
\newtheorem{lemma}{Lemma}
\newtheorem{theorem}{Theorem}
\newtheorem{corollary}{Corollary}
\newtheorem{conjecture}{Conjecture}
\newtheorem{appendixlemma}{Lemma}[section]
\newtheorem{remarksection}{Remark}[section]
\newtheorem{theoremsection}{Theorem}[section]
\begin{document}
\title{Regularity of Non-stationary Stable Foliations \\ of Toral Anosov Maps}
\author{\bf Alexandro Luna}
\date{}
\maketitle
\begin{abstract}
We consider a sequence of $C^2$ (or $C^3$) Anosov maps of the two-dimensional torus that satisfy a common cone condition, and show that if their $C^2$ (respectively, $C^3$) norms are uniformly bounded, then the non-stationary stable foliation must be of class $C^1$ (respectively, $C^{1+\text{H\"older}}$). This generalizes the classical results on smoothness of the invariant foliations of Anosov maps. We also provide an example that shows that an assumption on boundedness of the norms cannot be removed, which is a phenomenon that does not have an analog in the stationary setting.

The main motivation stems from a standing conjecture concerning the dimension properties of the spectra of Sturmian Hamiltonian operators, and this result serves as a first step towards addressing this conjecture. A detailed appendix is provided showing the potential argument and connection between this theory of non-stationary hyperbolic dynamics and the spectral dimension of these operators.

We also provide an addendum demonstrating that a similar result holds for a sequence of Anosov maps of the $d$-dimensional torus whose stable directions have codimension $1$.
\end{abstract}



\section{Introduction}

It is a classical result that  the stable foliation of a $C^2$ Anosov diffeomorphism of a compact Riemannian surface is $C^1$ (Theorem 6.3 \cite{hp}). If the diffeomorphism is $C^3$, then the foliation can even be taken to be $C^{1+\text{H\"older}}$ (Remark on p.38 \cite{HPS}, see also Appendix 1 in \cite{pt2}). 

 These, and more general, results turned out to be extremely useful in smooth ergodic theory for Hopf-type arguments \cite{A1967, H1971}, and also in rigidity theory \cite{HK1990, G2001}. Moreover, holonomies along invariant foliations do not change (too much) the fractal dimension of sets, and this fact was heavily used in dynamical systems \cite{PV1988}, and also in spectral theory, via the study of the dynamics of the so-called trace maps \cite{dg3, M}. 

 These applications motivated several studies of the regularity properties of these invariant foliations, starting with classical monographs such as \cite{HPS} and \cite{S1987}. In particular, it  was studied in \cite{H, H1997} for Anosov flows, and in \cite{PSW} for partially hyperbolic systems. For volume preserving Anosov flows over manifolds of dimension three, it is known that these foliations, and even the direction fields of their tangent planes, are $C^1$ with derivative of  Zygmund class \cite{HK1990, FH2003}. Moreover, if the flow is smooth, and these foliations are at least  $C^{1,\text{Lipschitz}}$, then the flow is necessarily smoothly conjugate to a linear Anosov flow \cite{HK1990}, which represents another rigidity-type result. In \cite{G2001}, a similar result was given for volume preserving Anosov diffeomorphisms of the two-torus. Namely, it is shown that in this setting, the holonomy of the unstable foliation has a derivative of bounded variation if and only if the Anosov diffeomorphism is smoothly conjugate to a linear map. In \cite{PR2005}, a rigidity result regarding the stable/unstable dimensions of hyperbolic invariant sets and regularity of stable/unstable holonomies was shown; see also the monograph \cite{PRF2009} for a detailed exposition of related results.

In this paper we provide a different kind of generalization of the classical results. Namely, we derive results on the regularity of stable foliations in the non-stationary case for families of Anosov diffeomorphisms of the two-torus. This setting is certainly not new. Many questions regarding (non-stationary,  non-autonomous) Anosov families have been studied staring with  \cite{AF}. In \cite{Mu1}, openness of the set of Anosov families in the space of two-sided sequences of $C^1$ diffeomorphisms, equipped with Whitney topology, was proved. Structural stability results were derived in \cite{Mu2, CRV}, and existence of local stable and unstable manifolds was shown in \cite{Mu}. Regularity properties of these stable/unstable foliations were studied in \cite{S}, but only in the situation where the tail end of the sequence of maps being considered is the same map. Other notions related to hyperbolicty such as Markov partitions, expansiveness, and a shadowing property for Anosov families were addressed in \cite{MuR}.

For random hyperbolic dynamics, much of the invariant manifold theory for discrete-time uniformly hyperbolic systems has been extended to this setting (see \cite{Ar, GK} and references therein). In \cite{ZLZ}, regularity of random stable foliations in the neighborhood of a common fixed point among the maps was examined in order to derive a random version of Belitskii's (also known as Sternberg's) $C^1$ Linearization Theorem.  

Here, we are interested in the regularity of the stable foliations. Our main result is given as follows:

If $\overline f=(f_n)$ is a sequence of $C^2$ diffeomorphisms of $\mathbb T^2$ whose elements satisfy a common cone condition, then it follows that the \textit{non-stationary stable set} with respect to $\overline f$ given by 

\begin{equation}\label{non stationary stable set}
    W^s\left(x,\overline f\right):=\left\{y\in\mathbb{T}^2: \lim\limits_{n\rightarrow\infty}\left|f_{n}\circ\cdots\circ f_{1}(x)-f_{n}\circ\cdots\circ f_1(y)\right|=0\right\}
\end{equation}
is a $C^2$ one-dimensional curve \cite{Ar}. Denoting $\mathcal W^s\left(\overline f \right):=\left\{W^s\left(x,\overline f\right)\right\}_{x\in\mathbb T^2}$, we prove

\begin{theorem}\label{Main Foliation Theorem}
    If $\overline  f=(f_n)$ is a sequence of $C^2$ diffeomorphisms of $\mathbb T^2$ such that $\{f_n\}$ satisfies a common cone condition and the collection of maps, and their inverses, are uniformly bounded in the $C^2$ topology, then  $\mathcal W^s\left(\overline f\right)$ forms a $C^1$ foliation of $\mathbb T^2$. 
    
    If $\{f_n\}$ is a collection of $C^3$ diffeomorphisms and the collection of maps, and their inverses, is uniformly bounded in the $C^3$ topology, then the foliation is $C^{1+\text{H\"older}}$.
\end{theorem}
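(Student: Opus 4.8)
The plan is to adapt the classical graph-transform / invariant-section argument to the non-stationary setting, working with the derivative cocycle and proving Hölder regularity of the stable direction field. Since the stable manifolds $W^s(x,\overline f)$ are already known to be $C^2$ curves, the task reduces to showing that the tangent line field $x \mapsto E^s(x) \subset T_x\mathbb{T}^2$ is $C^0$ (for $C^1$ foliation) and in fact Hölder (for the $C^{1+\text{H\"older}}$ case). First I would set up the standard framework: lift to $\mathbb{R}^2$, encode the stable direction by its slope $s(x)$ relative to a fixed splitting adapted to the cone, and observe that the invariance relation $Df_1(x)\cdot E^s(x) = E^s(f_1(x))$ (shifting the sequence $\overline f$ appropriately) gives a functional equation for the slope, namely a graph-transform fixed-point equation of the form $s(x) = \Phi_{x}\bigl(s(f_1 x)\bigr)$, where $\Phi_x$ is a contraction on the space of admissible slopes with contraction rate governed by the ratio of the unstable to stable expansion.

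The core of the argument is a regularity bootstrap. I would first establish that the slope field is continuous by exhibiting it as the uniform limit of the finite-time approximations $s_N(x)$ obtained by iterating the graph transform $N$ times starting from an arbitrary continuous section; the common cone condition guarantees that these iterates live in the cone and contract toward a unique limit, and the uniform $C^2$ (or $C^3$) bounds guarantee the contraction is uniform in $x$ and in the index $n$. The key estimate is that the graph transform contracts with a factor $\lambda$ (ratio of stable to unstable rates) while the maps $f_n$ distort distances by at most a fixed factor $\mu$ (from the uniform $C^1$ bound); regularity then follows from the standard inequality comparing these two rates. To get the Hölder exponent I would track how $|s(x)-s(y)|$ propagates under the functional equation: applying the mean value theorem to $\Phi$ and using the uniform $C^2$ bound on $Df_n$ (which controls the Lipschitz constant of $x\mapsto \Phi_x$), one obtains a self-improving inequality $|s(x)-s(y)| \le \lambda\,|s(f_1 x)-s(f_1 y)| + C|x-y|$, and summing the resulting geometric-type series with $|f_1 x - f_1 y|\le \mu|x-y|$ yields Hölder continuity with exponent depending on $\log\lambda / \log\mu$. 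In the $C^3$ case the extra derivative bound upgrades Lipschitz control of $\Phi_x$ to control of its modulus of continuity, which is exactly what promotes $C^1$ to $C^{1+\text{H\"older}}$.

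The main obstacle, and the reason the uniform-boundedness hypothesis is essential, is controlling the \emph{non-stationary} distortion uniformly along the whole sequence. In the classical stationary case the single map $f$ has fixed constants, so the telescoping series automatically converges; here the constants $C$ and $\mu$ arising at each step come from the different maps $f_n$, and without a uniform bound on their $C^2$ (or $C^3$) norms the series $\sum_n C_n \prod_{k<n}(\text{something})$ could diverge even though each individual term is finite. Thus the heart of the proof is verifying that the uniform $C^2$/$C^3$ bounds plus the common cone condition give $\lambda$ and $\mu$ that are uniform in $n$, so that the Hölder estimate closes. I expect the delicate bookkeeping to be in quantifying precisely how the Hölder exponent depends on the cone angle and the uniform expansion/contraction constants, and in verifying that the formal limit section is genuinely the tangent field to the already-known $C^2$ stable curves (rather than some other invariant field) — this last point requires an a priori regularity/uniqueness argument tying the graph-transform fixed point to the geometric stable manifolds from \cite{Ar}.
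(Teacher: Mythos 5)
There is a genuine gap, and it lies in your very first reduction: you claim that to prove $\mathcal W^s\left(\overline f\right)$ is a $C^1$ foliation it suffices to show the tangent line field $x\mapsto E^s\left(x,\overline f\right)$ is $C^0$, and that H\"older continuity of this field yields a $C^{1+\text{H\"older}}$ foliation. This is not the right target. Continuity of $E^s\left(\cdot,\overline f\right)$ is automatic from the common cone condition alone (it is recorded as a lemma in the preliminaries, with no boundedness hypothesis), so under your reduction the first half of the theorem would have nothing left to prove --- yet Section \ref{Non-example section} of the paper constructs a sequence satisfying the common cone condition whose stable foliation is \emph{not} $C^1$. Transverse $C^1$ regularity of a foliation (i.e., $C^1$ holonomies and charts) requires the tangent distribution to be $C^1$, not merely continuous or H\"older; this is exactly the distinction the paper draws via Table 1 of \cite{PSW}, and the actual content of the proof (Theorem \ref{Smoothness of Splitting}) is that $E^s\left(\overline f\right)$ is $C^1$, respectively $C^{1+\beta}$.

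Correspondingly, your main analytic device --- the self-improving inequality $|s(x)-s(y)|\le \lambda\,|s(f_1x)-s(f_1y)|+C|x-y|$ summed along orbits --- is the classical argument that hyperbolic splittings are H\"older; it cannot produce differentiability of $s$. What is needed, and what the paper does, is a non-stationary version of the $C^1$ and $C^{1+\text{H\"older}}$ section theorems: one runs the graph transform on the projectivized stable cone field in $\mathrm{Gr}_1\left(T\mathbb T^2\right)\cong \mathbb T^2\times S^1$, and simultaneously runs the \emph{linearized} graph transform on candidate derivatives $H\in C^0\left(\mathbb T^2, L\left(\mathbb R^2,\mathbb R\right)\right)$, invoking a fiber contraction principle for sequences of maps (Lemmas \ref{non-stationary contraction principle} and \ref{fiber contraction}) to show that the pair (section, candidate derivative) converges and that the limit of the derivatives is $D\sigma^*$. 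The uniform $C^2$ bound enters precisely to make this linearized transform uniformly bounded and continuous in the base point, and the codimension-one fact that the fiber contraction dominates the base expansion (condition (\ref{fiber and base contraction for skew product})) is what closes the argument; the $C^3$ bound then lets one propagate a H\"older class for the derivative. Your proposal has the correct contraction-rate heuristics and correctly identifies why uniform boundedness is essential, but it is missing this entire second layer, so as written it proves a strictly weaker statement than the theorem.
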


 When $r\geq 1$, a standard technique in the classical setting to prove that invariant  foliations are $C^r$ has been to use a version of the $C^r$ section theorem to prove that the associated (strong) stable/unstable splittings are $C^r$, and hence must uniquely integrate to $C^r$ foliations. Many versions of this section theorem have been derived in the literature and a generalized version can be found in \cite{PS2}. While our approach is similar, and we prove smoothness of the non-stationary stable direction field (see Theorem \ref{Smoothness of Splitting} below), our proof requires uniform boundedness assumptions on the $C^2$ (or even $C^3)$ norms of the maps, which is a dilemma that does not arise in the stationary setting. We even construct an example to demonstrate that this assumption cannot be removed. To the best of our knowledge, such phenomenon has not been considered before.
 
As seen in the discussion of regularity of foliations in \cite{PSW}, the claim that these foliations have $C^r$ tangent plane fields is stronger than the statement that the foliations are $C^r$ themselves. The original assumption to prove $C^{1+\text{H\"older}}$ regularity of invariant foliations was $C^{2+\text{H\"older}}$ smoothness of the diffeomorphism (see \cite{hp,HPS}). That assumption was relaxed in \cite{PR2002, T}. There, the authors prove $C^{1+\text{H\"older}}$ regularity of invariant foliations for a hyperbolic invariant set of a $C^{1+\text{H\"older}}$  diffeomorphism, under the assumption that this invariant set has local product structure and the stable leaves are one-dimensional. \emb{Here, the authors work with the the basic holonomy maps directly, and do not claim any type of regularity for the stable/unstable directions fields (i.e. tangent planes of the stable/unstable laminations).} For the applications we have in mind (see Appendix \ref{Applications}), we are interested in the dynamics of real analytic (in fact, polynomial) maps, so we do not generalize this particular result. \emb{It would be interesting to see whether a similar result holds in the non-stationary setting, but the techniques do not directly transfer since the works \cite{PR2002, T}  directly rely on the stationary setting. In particular, they make use of both the stable and unstable manifolds, but for our non-stationary setting, only the notion of stable manifolds make sense.}

In Section \ref{Torus Section Problem} we derive a non-stationary version of the $C^{r}$ section theorem, for \emb{$r\in [1,2)$.} In Section \ref{Proof} we use it to  prove Theorem \ref{Main Foliation Theorem} by  showing  that the non-stationary stable direction fields are $C^1$, or $C^{1+\text{H\"older}}$, under the given assumptions \emb{(}Theorem \ref{Smoothness of Splitting}\emb{)}. Finally, in Section \ref{Non-example section}, we construct an example demonstrating that the uniform $C^2$ boundedness assumption of the maps cannot be removed. \emb{In Section \ref{s: The Codimension 1 Case}, as an addendum, we state and sketch the proof of an improved version of this theorem for the case when each $f_n$ has stable directions with codimension $1$. In order to convey a clear argument, and since our future application is very specific to surface diffeomorphisms, we give full details for Theorem \ref{Main Foliation Theorem} as stated above and later explain how a nearly identical proof gives the stable codimension 1 version.}

The initial motivation for this project stems from attempts to analyze the non-stationary dynamics of the so-called \textit{trace maps}, as the dynamics of these maps have played a central role in understanding the dimensional properties of the spectra of one-dimensional Sturmian Hamiltonians. \emb{That is, we consider the bounded self-adjoint operator $H_{\lambda, \alpha, \omega}: \ell^2(\mathbb{Z})\rightarrow \ell^2(\mathbb{Z}) $
via
$$[H_{\lambda, \alpha, \omega}u](n)= u(n+1)+u(n-1)+\lambda\chi_{[1-\alpha, 1)}\left(\omega+n\alpha \ (\text{mod} \ 1)\right)u(n),$$
where $\lambda>0$, $\alpha\in(0,1)$ is irrational, and $\omega\in S^1$. Such operators have been a leading choice of model in studying electronic properties of one-dimensional quasicrystals. It is known (e.g. see Theorem 4.9  \cite{DF}) that the spectrum of $H_{\lambda, \alpha, \omega}$ is independent of $\omega$ and that such a spectrum is a Cantor set of Lebesgue measure zero \cite{bist}. The following conjecture was posed by Bellissard in the 1980s.
\begin{conjecture}\label{main conjecture}
For each $\lambda>0$, the Hausdorff dimension of the spectrum of $H_{\lambda,\alpha,\omega}$ is $\alpha$-constant Lebesgue almost everywhere. 
\end{conjecture}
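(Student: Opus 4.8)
The plan is to realize the spectrum $\Sigma_{\lambda,\alpha}$ of $H_{\lambda,\alpha,\omega}$ (which the excerpt recalls is independent of $\omega$) dynamically, and then transport the problem to an ergodic-theoretic statement about the Gauss map $G:\alpha\mapsto\{1/\alpha\}$. First I would recall the standard transfer-matrix/trace-map description: an energy $E$ belongs to $\Sigma_{\lambda,\alpha}$ precisely when the orbit of the corresponding initial condition under the Sturmian trace map stays bounded, where the renormalization performed at step $k$ is dictated by the $k$-th partial quotient $a_k(\alpha)$ of the continued fraction expansion of $\alpha$. Because the $a_k(\alpha)$ vary with $k$, the relevant dynamics is genuinely \emph{non-stationary}: one obtains a sequence of maps $\overline f=(f_n)$, built from the trace map iterated according to the partial quotients, rather than a single Anosov map, and the spectrum is recovered as the set of $E$ for which the curve of initial conditions meets the non-stationary stable set $W^s(\,\cdot\,,\overline f)$ of Equation \eqref{non stationary stable set}.

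The role of Theorem \ref{Main Foliation Theorem} is to guarantee that, on the region where the common cone condition and uniform $C^2$ (or $C^3$) bounds hold, the family $\mathcal W^s(\overline f)$ is a $C^1$ (respectively $C^{1+\text{H\"older}}$) foliation. This regularity is what allows the intersection of the curve of initial conditions with the stable foliation to be treated as a dynamically defined Cantor set with controlled holonomies, so that $\dim_H\Sigma_{\lambda,\alpha}$ can be expressed through a Bowen-type pressure equation. Concretely I would set up a thermodynamic formalism in which the dimension is the unique zero of a pressure function whose potential is governed by the contraction rates along the non-stationary stable leaves; the $C^1$ smoothness of the leaves is what renders the distortion estimates in this formalism uniform, and the $C^{1+\text{H\"older}}$ case would give the quantitative control needed to identify the zero.

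The key step — and the one I expect to carry the real difficulty — is to show that $\dim_H\Sigma_{\lambda,\alpha}$ depends on $\alpha$ only through the asymptotics of its continued fraction expansion, and then to invoke ergodicity. The partial quotients $(a_k(\alpha))$ are exactly the symbolic coding of the orbit of $\alpha$ under $G$, which is ergodic with respect to the Gauss measure $d\mu=\frac{1}{\log 2}\frac{dx}{1+x}$, a measure equivalent to Lebesgue. If one can prove that $\alpha\mapsto\dim_H\Sigma_{\lambda,\alpha}$ is measurable and invariant (up to a Lebesgue-null set) under $G$ — intuitively because applying $G$ shifts the continued fraction expansion and thereby alters only finitely many renormalization steps, which does not affect the asymptotic contraction rates that determine the dimension — then ergodicity forces it to be $\mu$-a.e.\ constant, and since $\mu\sim\mathrm{Leb}$ this is exactly the asserted Lebesgue-a.e.\ constancy in $\alpha$.

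The main obstacle is precisely the \emph{uniformity} across the partial quotients, and this is why the conjecture remains open and the present paper is only a first step. Theorem \ref{Main Foliation Theorem} demands a common cone condition together with uniform $C^2$/$C^3$ bounds, yet large partial quotients $a_k$ correspond to long iterates of the trace map and can degrade exactly those uniform bounds; this is the non-stationary analog of the blow-up that the counterexample of Section \ref{Non-example section} is constructed to exhibit. Taming these excursions — for instance by exploiting that large partial quotients occur with controlled frequency under the Gauss measure, or by a renormalization scheme that keeps the relevant maps in a uniformly hyperbolic regime while preserving the foliation estimates — is where the hardest analysis lies, and where the connection developed in Appendix \ref{Applications} would have to be pushed substantially further.
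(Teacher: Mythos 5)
The statement you were asked to prove is an open conjecture: the paper does not prove it, and in fact explicitly states that it is known only for $\lambda\geq 24$ and that ``not even partial results are known when $\lambda$ is small.'' There is therefore no proof in the paper to compare against; the paper's contribution (Theorem \ref{Main Foliation Theorem}) is advertised only as a first step, and Appendix \ref{Applications} lays out a conditional strategy, not an argument. Your proposal reproduces that strategy faithfully --- trace-map dynamics driven by the partial quotients, realization of $\sigma_{\lambda,\alpha}$ as $L_\lambda\cap\mathcal W^s_\lambda(\alpha)$, $C^1$ holonomy along the stable foliation making $\sigma_{\lambda,\alpha}$ and $\sigma_{\lambda,\mathcal G(\alpha)}$ diffeomorphic Cantor sets, and then ergodicity of the Gauss map --- and you are right to flag that it is not a proof. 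But you should be more precise about where it breaks.

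The decisive gap is Problem \ref{NS Trace Map Foliation}: one must show that $\mathcal W^s_\lambda(\alpha)$ sits inside a $C^1$ foliation for \emph{every} irrational $\alpha$ (or at least Lebesgue-a.e.\ $\alpha$), and Theorem \ref{Main Foliation Theorem} does not deliver this. First, Lebesgue-almost every $\alpha$ has unbounded partial quotients, so the family $\{T_{a_n}\}$ is \emph{not} uniformly bounded in $C^2$, and Section \ref{Non-example section} shows that this hypothesis cannot simply be dropped --- so the obstruction is not merely technical. Second, the restricted trace maps are hyperbolic only on the non-wandering set of the cubic surface $S_\lambda$; they are not Anosov diffeomorphisms of $\mathbb T^2$, so even the bounded-quotient case requires a genuine extension of the theorem (the semiconjugacy to toral Anosov maps used in \cite{L} is only available at $\lambda=0$). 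Third, the transversality of $L_\lambda$ to the foliation, which you need for the holonomy argument, is asserted but not free. Finally, your detour through a Bowen-type pressure equation is unnecessary for the constancy claim --- Hausdorff dimension is already a diffeomorphism invariant of Cantor sets, which is all the paper's Appendix \ref{Applications} uses --- and non-stationary thermodynamic formalism would itself be a substantial unproven ingredient. In short: the outline is the right (and the intended) outline, but every load-bearing step beyond Theorem \ref{Main Foliation Theorem} is currently open, which is why the statement remains a conjecture.
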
\label{c: almost sure constancy}
The conjecture has been proved when $\lambda\geq 24$ \cite{DG2015} but not even partial results are known when $\lambda$ is small. In Appendix \ref{Applications}, we explain how such spectra can be derived from the non-stationary dynamics of a given family of surface hyperbolic polynomial maps (the trace maps), and in particular, explain how the related smooth foliation theory can be useful in tackling Conjecture \ref{main conjecture}.}

\subsection{Background and Preliminaries}\label{background and preliminaries}
Let $\mathcal F\subset \text{Diff}^1\left(\mathbb T^2\right)$. We say that $\mathcal F$ satisfies a \textit{common cone condition} if there are constants $0<\mu<1<\eta$, such that for each $x\in \mathbb T^2$, there are one-dimensional subspaces $H_x$ and $V_x$, that depend continuously on $x$, such that $T_x\mathbb T^2=H_x\oplus V_x$, so that for each $f\in\mathcal F$, we have

\begin{itemize}
   
        \item[1.)] For the sets
      $$K^s(x):=\{ (v_1, v_2)\in H_x\oplus V_x : |v_1| \leq \mu |v_2|\}, $$ $$ K^u(x):=\{ (v_1, v_2) \in H_x\oplus V_x: |v_2| \leq \mu |v_1|\},$$
       we have 
       \begin{equation}\label{invariance of the cone field}
           {D_xf \left(K^u(x)\right) } \subseteq \text{int\,}K^u(f(x)) \ \ \text{and} \ \ {D_xf^{-1} \left(K^s(x)\right)}\subseteq \text{int\,}K^s\left (f^{-1}(x) \right),
       \end{equation}
       and the angle between any vector from $D_xf \left(K^u(x)\right)$ and $\partial K^u(f(x))$, and the angle between any vector from ${D_xf^{-1} \left(K^s(x)\right)}$ and $\partial K^s\left(f^{-1}(x)\right)$, are uniformly bounded away from zero
      \item[2.)] $\left \|D_xf v\right\| \geq \eta \|v\|$ for $v\in K^u(x)$
    \item[3.)] $\left \|D_xf^{-1}v \right \| \geq \eta \|v\|$ for $v\in K^s( x)$
\end{itemize}
The sets $K^s(x)$ (resp. $K^u(x)$) are usually referred to as \textit{stable} (resp. \textit{unstable}) \textit{cones}. 
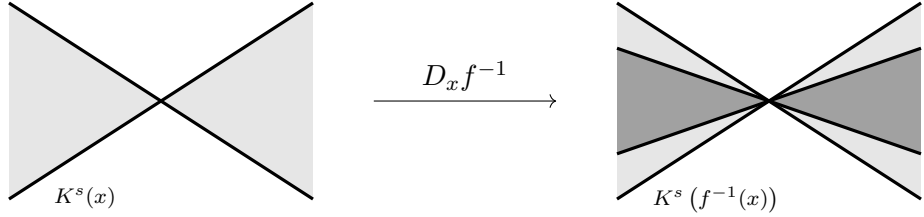
\begin{figure}[h]
   \begin{center}
    
   \begin{tikzpicture}

\draw[black, very thick] (6, .65*2)-- (2,.65*-2);
\draw[black, very thick] (6, -.65*2)-- (2,-.65*-2);

\draw[black, very thick] (6, .35*2)-- (2,.35*-2);
\draw[black, very thick] (6, -.35*2)-- (2,-.35*-2);

\fill[black, opacity=0.1] (6, .65*2) -- (4,0) -- (6, -.65*2)-- (6, .65*2);

\fill[black, opacity=0.1] (2,.65*-2) -- (4,0) -- (2,-.65*-2) -- (2,.65*-2);

\fill[black, opacity=0.3] (6, .35*2) -- (4,0) -- (6, -.35*2)-- (6, .35*2);

\fill[black, opacity=0.3] (2,.35*-2) -- (4,0) -- (2,-.35*-2) -- (2,.35*-2);

\draw[black, very thick] (-6, .65*2)-- (-2,.65*-2);
\draw[black, very thick] (-6, .65*-2)-- (-2,.65*2);

\fill[black, opacity=0.1] (-6, .65*2) -- (-4,0) -- (-6, -.65*2) -- (-6, .65*2);

\fill[black, opacity=0.1] (-2, .65*2) -- (-4,0) -- (-2, -.65*2) -- (-2, .65*2);

\draw[->]  (-1.2,0) -- (1.2,0) node[midway, above]{$D_xf^{-1}$};

\coordinate[label=below: \scriptsize $K^s(x)$] (C) at (-5,-1);

\coordinate[label=below: \scriptsize $K^s\left(f^{-1}(x)\right)$] (C) at (3.3,-1);
\end{tikzpicture}

    \caption{Image of the cone $K^s(x)$ under $D_xf^{-1}$.}
    \label{Cone condition figure}

    \end{center}
\end{figure}

Let $\overline f=(f_n)$ be a sequence in $\mathcal F$. For each $x\in\mathbb{T}^2$, the stable set $W^s\left(x, \overline f\right)$ as defined in (\ref{non stationary stable set}) is the image of an injective $C^1$ immersion from $\mathbb{R}$ to $\mathbb T^2$. This can be seen as a result from random hyperbolic dynamics \cite{Ar} or from the related results in \cite{Mu}, and we also have that $\mathcal W^s\left(\overline f \right):=\left\{W^s\left(x,\overline f\right)\right\}_{x\in\mathbb T^2}$ is a collection of pair-wise disjoint curves that form a continuous foliation of $\mathbb T^2$. From this same theory, it is also known that

\begin{lemma}
    For each $x\in \mathbb T^2$, we have that 
    \begin{equation}\label{stable direction as pullback of cones}
        E^s(x,\overline f):=\bigcap_{n=1}^{\infty}D_{f_n\circ\cdots\circ f_1(x)}\left(f_1^{-1}\circ\cdots\circ f_n^{-1}\right)\left[K^s\left(f_n\circ\cdots\circ f_1(x)\right)\right]
    \end{equation}
    is a one dimension subspace of $T_x\mathbb T^2$ that varies continuously in $x$. Moreover, $T_xW^s\left(x,\overline f\right)=E^s(x,\overline f)$.
\end{lemma}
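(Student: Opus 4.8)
The plan is to realize $E^s(x,\overline f)$ as the intersection of a nested, geometrically shrinking family of cones in $T_x\mathbb{T}^2$, and then to match this line with the tangent of $W^s(x,\overline f)$. Write $x_n:=f_n\circ\cdots\circ f_1(x)$ (with $x_0=x$) and set
\begin{equation*}
C_n(x):=D_{x_n}\left(f_n\circ\cdots\circ f_1\right)^{-1}\left[K^s(x_n)\right]\subseteq T_x\mathbb{T}^2,
\end{equation*}
so that $E^s(x,\overline f)=\bigcap_{n\geq 1}C_n(x)$. The first step is to show these cones are nested, with $C_{n+1}(x)\setminus\{0\}\subseteq\mathrm{int}\,C_n(x)$. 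Factoring $\left(f_{n+1}\circ\cdots\circ f_1\right)^{-1}=\left(f_n\circ\cdots\circ f_1\right)^{-1}\circ f_{n+1}^{-1}$ and applying the chain rule at $x_{n+1}$ gives $C_{n+1}(x)=D_{x_n}\left(f_n\circ\cdots\circ f_1\right)^{-1}\bigl[D_{x_{n+1}}f_{n+1}^{-1}\left[K^s(x_{n+1})\right]\bigr]$; the stable-cone invariance (\ref{invariance of the cone field}) yields $D_{x_{n+1}}f_{n+1}^{-1}\left[K^s(x_{n+1})\right]\subseteq\mathrm{int}\,K^s(x_n)$, and since the linear isomorphism $D_{x_n}\left(f_n\circ\cdots\circ f_1\right)^{-1}$ sends interiors to interiors, the claimed inclusion follows.

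The crux is to prove that the angular diameter of $C_n(x)$ tends to $0$, uniformly in $x$; I expect this to be the main technical point. I would work in the projective line of $T_x\mathbb{T}^2$, recording a stable cone by the interval of slopes it subtends in the splitting $H_x\oplus V_x$ (so $K^s$ corresponds to $|s|\leq\mu$). Each $D_{x_k}f_k^{-1}$ induces a M\"obius map on slopes that carries the stable interval into a compactly contained subinterval, and the uniform-angle clause in the common cone condition ensures the margin of this containment is bounded away from zero independently of $k$ and $x$. Hence each such map contracts the Hilbert metric of the slope interval by a factor $\theta\in(0,1)$ independent of $k$ and $x$, and the $n$-fold composition realizing $C_n(x)$ contracts by $\theta^n$. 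This both forces $\bigcap_n C_n(x)$ to be a single direction $E^s(x,\overline f)$ and, being uniform in $x$, makes the convergence $C_n(x)\to E^s(x,\overline f)$ uniform. Since each $C_n(x)$ depends continuously on $x$ (the $f_k$ are fixed and $C^1$, and $K^s,H_x,V_x$ vary continuously), continuity of $x\mapsto E^s(x,\overline f)$ follows as a uniform limit of continuous maps into the projective line.

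Finally, for the identification $T_xW^s(x,\overline f)=E^s(x,\overline f)$, I would use the dichotomy for forward orbits of tangent vectors under the cone condition. If $v\in E^s(x,\overline f)$ then $D_x(f_n\circ\cdots\circ f_1)v\in K^s(x_n)$ for every $n$, and condition (3) applied along the orbit forces $\left\|D_x(f_n\circ\cdots\circ f_1)v\right\|\leq\eta^{-n}\|v\|\to 0$. Conversely, any direction other than $E^s(x,\overline f)$ has a forward image that eventually enters $K^u$, after which the forward invariance of the unstable cone in (\ref{invariance of the cone field}) together with condition (2) forces its norm to grow without bound. Because $W^s(x,\overline f)$ is a $C^1$ curve along which the forward compositions contract distances, by the defining property (\ref{non stationary stable set}), its tangent at $x$ cannot be of the expanding type; being one-dimensional, it must coincide with $E^s(x,\overline f)$. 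The subtlety here is justifying that every non-stable direction actually reaches $K^u$, which I would settle through the slope picture: forward iteration attracts every slope outside the stable interval toward the unstable cone, the stable direction being the unique forward-repelling fixed direction.
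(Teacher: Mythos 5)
Your first two steps are sound and essentially reconstruct what the paper simply cites: the nesting of the pulled-back cones, the uniform Birkhoff/Hilbert-metric contraction coming from the uniform-angle clause (which is exactly why that clause is in the definition), and continuity as a uniform limit of continuous cone fields. This is a correct, self-contained version of Proposition 6.2.12 and Lemma 6.2.15 of Katok--Hasselblatt, which is all the paper invokes for that half of the lemma.

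The tangency step, however, has a genuine gap, and it sits precisely where you flag ``the subtlety.'' Your dichotomy requires that every direction outside $E^s(x,\overline f)$ eventually enters $K^u$ under forward iteration, and your proposed justification --- that forward iteration attracts every slope outside the stable interval toward the unstable cone, with ``the stable direction being the unique forward-repelling fixed direction'' --- is a picture from the stationary case. For a one-sided non-stationary sequence there is no fixed direction at all; what the cone condition actually gives is only that $\overline{Df_{n+1}}$ maps the arc $\mathbb{RP}^1\setminus\mathrm{int}\,\widehat K^s(x_n)$ into $\mathbb{RP}^1\setminus\widehat K^s(x_{n+1})$ and maps $\widehat K^u(x_n)$ into $\mathrm{int}\,\widehat K^u(x_{n+1})$ with uniform margin. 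Nothing in the hypotheses bounds from below the amount by which each individual map pushes a direction in the gap between the two cones toward $K^u$ (the uniform-angle clause controls the image of $K^s$ under $Df^{-1}$, not the projective distortion on the gap, and at the level of this lemma there is not even a uniform bound on $\|Df_n\|$), so a gap direction could a priori creep toward $\partial K^s$ forever without ever entering $K^u$; for such a direction neither condition (2) nor condition (3) applies and no expansion follows. A second, smaller issue: even granting the dichotomy, ``forward compositions contract distances along $W^s$, hence its tangent cannot be of the expanding type'' conflates pointwise convergence $|f_n\circ\cdots\circ f_1(y)-f_n\circ\cdots\circ f_1(x)|\to 0$ for each fixed $y$ with control of $\|D(f_n\circ\cdots\circ f_1)v\|$ for the tangent vector $v$; to close that you would need to argue that once a sub-arc of the image curve is tangent to $K^u$ its endpoints separate at rate $\eta^n$ (using that $K^u$-tangent curves are uniform graphs over $H$), which you do not do.

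The efficient repair is the one the paper's one-line proof is gesturing at: the Hadamard--Perron/graph-transform construction of $W^s_{\mathrm{loc}}(x,\overline f)$ produces it as a $C^1$ curve whose image under $f_n\circ\cdots\circ f_1$ is tangent to $K^s$ at every point for every $n$, so $T_xW^s(x,\overline f)\subseteq D_{x_n}\left(f_1^{-1}\circ\cdots\circ f_n^{-1}\right)\left[K^s(x_n)\right]$ for all $n$, i.e.\ $T_xW^s(x,\overline f)\subseteq E^s(x,\overline f)$, and equality holds because both are lines. This avoids the expansion dichotomy entirely and is the route you should take.
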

\begin{proof}
    That $E^s\left(x,\overline f\right)$ is one-dimensional and continuous in $x$ follows from Proposition 6.2.12 and Lemma 6.2.15, respectively, in \cite{KH}. That $T_xW^s\left(x,\overline f\right)=E^s(x,\overline f)$ follows immediately from the existence of $W^s_{\text{loc}}\left(x,\overline f\right)$ as a $C^1$ curve.
\end{proof}
 Define $E^s\left(\overline f\right):=\bigsqcup_{x\in\mathbb T^2}E^s\left(x,\overline f\right),$ to be the \textit{non-stationary stable bundle of} $\overline f$. Notice that (\ref{stable direction as pullback of cones}) is equivalent to say
 \begin{equation}\label{stable bundle as pullback of cone field}
     E^s\left(\overline f \right)=\bigcap_{n=1}^{\infty} T\left(f_1^{-1}\circ\cdots \circ f_n^{-1}\right)\left(K^s\right)
 \end{equation}
 where $K^s:=\bigsqcup_{x\in\mathbb T^2} K^s(x)$, and $Tf$ is the tangential of $f$ defined by $Tf(v)= D_xf(v)$ where $x=\pi(v)$ and $\pi$ is the projection map of the bundle $T\mathbb T^2$ onto $\mathbb T^2$. We will prove

\begin{theorem}\label{Smoothness of Splitting} If $\overline f=(f_n)$, $f_n\in \mathrm{Diff}^2\left(\mathbb T^2\right)$, is a sequence such that $\{f_n\}$ satisfies a common cone condition and 
\begin{equation}\label{C^2 norms bounded condition}
    \max\left\{\sup_{n}\left\|f_n\right\|_{C^2}, \sup_{n}\left\|f_n^{-1}\right\|_{C^2}\right\}<\infty,
\end{equation}
then the bundle $E^s\left(\overline f\right)$ is $C^1$. If $f_n\in \mathrm{Diff}^3\left(\mathbb T^2\right)$ for all $n\in\mathbb N$, and 
\begin{equation}\label{C^3 norms bounded condition}
    \max\left\{\sup_{n}\left\|f_n\right\|_{C^3}, \sup_{n}\left\|f_n^{-1}\right\|_{C^3}\right\}<\infty,
\end{equation}
then the bundle is $C^{1+\beta}$ for some $\beta\in (0,1)$.
\end{theorem}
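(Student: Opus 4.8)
The plan is to realize $E^s(\overline f)$ as the unique invariant section of a fiber-contracting bundle map and then to invoke the non-stationary $C^r$ section theorem of Section \ref{Torus Section Problem}. The starting point is the equivariance relation extracted from (\ref{stable bundle as pullback of cone field}): writing $\sigma^k\overline f=(f_{k+1},f_{k+2},\dots)$ for the $k$-fold shifted sequence and $E_k(x):=E^s(x,\sigma^k\overline f)$, one checks directly from the nested–cone description that
\begin{equation*}
E_{k-1}(x)=D_{f_k(x)}f_k^{-1}\bigl[E_k(f_k(x))\bigr].
\end{equation*}
Thus the stable directions of all tails form a simultaneous fixed point of the graph-transform operator sending a candidate family $(\tau_k)$ to $x\mapsto D_{f_k(x)}f_k^{-1}[\tau_k(f_k(x))]$. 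Coordinatizing each stable line by its slope $p_k(x)\in\mathbb R$ relative to the continuous splitting $H_x\oplus V_x$, this reads $p_{k-1}(x)=g_k(x,p_k(f_k(x)))$, where $g_k(x,\cdot)$ is the Möbius action of $D_{f_k(x)}f_k^{-1}$ on slopes. The cone condition (\ref{invariance of the cone field}) forces $g_k(x,\cdot)$ to map the stable-slope interval strictly into itself, so $|\partial_p g_k|\le\kappa<1$ uniformly in $k,x$; this is the fiber contraction underlying the continuous section from the quoted Lemma, which I would first re-derive in this framework to fix the constants.

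To obtain $C^1$ regularity I would pass to the formally differentiated equation. Writing $q_k:=\partial_x p_k$ and differentiating in $x$ yields
\begin{equation*}
q_{k-1}(x)=\partial_x g_k\bigl(x,p_k(f_k x)\bigr)+\partial_p g_k\bigl(x,p_k(f_k x)\bigr)\,q_k(f_k x)\,D_xf_k,
\end{equation*}
which is again a fiber-contraction fixed-point problem for the candidate derivatives $(q_k)$ over the same base cocycle. The homogeneous part contracts with factor $|\partial_p g_k|\cdot\|D_xf_k\|$, and the decisive pinching estimate is that, since $g_k$ is the projectivization of $Df_k^{-1}$ and the slopes sit near the attracting (stable) direction, $|\partial_p g_k|$ is comparable to the singular-value ratio $\lambda_k^s/\lambda_k^u$ of $Df_k$, while $\|D_xf_k\|\approx\lambda_k^u$. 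Their product is therefore $\approx\lambda_k^s<1$ uniformly by the cone condition, so the tangent operator is a uniform fiber contraction no matter how large the individual expansions $\lambda_k^u$ are.

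The role of the norm bounds (\ref{C^2 norms bounded condition}) lies entirely in the inhomogeneous term: $\partial_x g_k$ records how the linear map $Df_k^{-1}$ rotates with the base point and is therefore controlled by the $C^2$ norms of $f_k$ and $f_k^{-1}$, so the existence of a \emph{bounded continuous} invariant family $(q_k)$ requires $\sup_k\|\partial_x g_k\|_{C^0}<\infty$. With this the non-stationary $C^1$ section theorem produces a unique bounded continuous $(q_k)$, and a standard argument identifies it with the genuine derivative of $(p_k)$, whence each $E_k$—in particular $E^s(\overline f)=E_0$—is $C^1$. For the second statement I would rerun the scheme with $r=1+\beta$: the $C^3$ bounds (\ref{C^3 norms bounded condition}) render $\partial_x g_k$ uniformly $\beta$-Hölder, while the refined pinching $\sup_k \lambda_k^s(\lambda_k^u)^{\beta}<1$ is solvable for a uniform $\beta\in(0,1)$ precisely because the norm bounds keep $\lambda_k^u$ bounded above and the cone condition keeps $\lambda_k^s$ bounded below $1$; the $C^{1+\beta}$ section theorem then upgrades $(q_k)$ to uniformly Hölder sections, giving $E^s(\overline f)\in C^{1+\beta}$.

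I expect the main obstacle to be the non-stationary section theorem itself, and within it the uniform verification of the pinching: with no single base map one must control an entire shift-indexed family $(q_k)$ simultaneously and establish $|\partial_p g_k|\,\|Df_k\|\approx\lambda_k^s$ with constants independent of $k$. This is exactly where uniform boundedness of the norms cannot be avoided—it is what keeps both the forcing terms $\partial_x g_k$ and the bunching constants uniform along the sequence—and the construction of Section \ref{Non-example section} confirms that dropping it genuinely destroys the conclusion.
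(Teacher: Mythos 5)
Your proposal is correct and follows essentially the same route as the paper: projectivize the stable direction into a section of a slope/circle bundle over $\mathbb T^2$, observe that the cone condition forces the pinching $|\partial_p g_k|\cdot\|D_xf_k\|<1$ uniformly (the paper's estimate (\ref{e: Fiber and base contraction}), where the product collapses to $\lambda_k^s\le \eta^{-1}$ exactly as you argue), use the uniform $C^2$ (resp.\ $C^3$) bounds to control the inhomogeneous term $\partial_x g_k$ and the constant $\kappa$, and feed this into the non-stationary section theorem, choosing $\beta$ with $\kappa^{\beta}\eta^{-1}<1$ for the H\"older case. The only cosmetic differences are that you work in an affine slope chart on $\mathbb{RP}^1$ rather than the paper's identification $\phi_x$ of fibers with $S^1$, and you package the construction as a simultaneous fixed point of the shift-indexed family $(E_k)$ rather than as the forward limit $\Gamma_1\circ\cdots\circ\Gamma_n(\sigma)$ of a single section.
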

As a corollary (see Table 1 in \cite{PSW}), we obtain Theorem \ref{Main Foliation Theorem}.

\begin{remark}
    As we will see from Remark \ref{torus section alternate holder condition} below, the $C^{1+\text{H\"older}}$ version can be obtained by replacing assumption (\ref{C^3 norms bounded condition}) with the condition that each $f_n$ is $C^{2+\epsilon}$, with common H\"older constant, for some $\epsilon>0$.
\end{remark}

\begin{remark}
    We note that condition (\ref{C^2 norms bounded condition}) is satisfied in the natural cases when $\{f_n\}$ is a finite set, each $f_n$ is a $C^2$ perturbation of a specific $C^2$ Anosov map, or each $f_n$ is $C^{2+\text{H\"older}}$ with a common H\"older constant and exponent.
\end{remark}

\begin{remark}
    For the proof to come, the main tools will mainly rely on the portion of the cone condition involving the stable cones. However, we note that the unstable cone condition is needed to guarantee the existence of the stable manifolds as one dimensional curves. Notice that the full cone conditions also ensures that each individual map $f_n$ is an Anosov diffeomorphism.
\end{remark}

For the convenience of presentation, we give the full proof of this theorem as stated, but later in Section \ref{s: The Codimension 1 Case}, we formulate a more general version in the case when the corresponding stable directions of each $f_n$ have codimension $1$. This amounts to considering Anosov maps $f_n:\mathbb T^d\rightarrow \mathbb T^d$, $d>1$, where they satisfy a common cone condition as above but with
$\dim(H_x)=d-1$ and $\dim(V_x)=1$. 
The proof is nearly identical, but the presentation is cleaner in the surface case, which we believe offers an overall more convincing argument. The reader may skip the latter presentation on the first read.

\section{Non-stationary Section Theorem}\label{Torus Section Problem}
In order to prove Theorem \ref{Smoothness of Splitting}, we will first prove non-stationary versions of the $C^{1}$ and $C^{1+\text{H\"older}}$ section theorems. The proof will rely on the technical contraction mapping lemmas in Appendix \ref{Technical Lemmas}.

For any $r\in\left(0,\frac{1}{2}\right)$ and continuous map $\varphi(x):\mathbb T^2\rightarrow S^1$, denote $K_r(\varphi(x)):= [\varphi(x)-r, \varphi(x)+r]$, and define
$$K(r,\varphi):=\{(x,z)\in \mathbb T^2\times S^1: z\in K_r(\varphi(x))\}.$$
We refer to $K(r,\varphi)$ as a \textit{band}.
\begin{theorem}\label{Torus Section Theorem}
Suppose that $(g_n)$, $g_n:\mathbb T^2\rightarrow \mathbb T^2$, is a  sequence of $C^1$ diffeomorphisms such that
\begin{equation}
  \kappa:=  \sup_{n\in\mathbb{N}}\left\{ \sup_{x\in\mathbb T^2}\kappa_n(x)\right\}<\infty,
\end{equation} 
where $\kappa_n(x)=\left\|D_xg_n^{-1}\right\|$.
Furthermore, suppose $(h_n)$, $h_n:\mathbb T^2\times S^1\rightarrow S^1$, a sequence of $C^1$ maps such that, for some band $K(r,\varphi)$, we have
\begin{equation}\label{Fiber invariance}
    h_n(x,z)\in K_r(\varphi(g_nx))
\end{equation}
for each $(x,z)\in K(r,\varphi)$ and $n\in\mathbb N$, and 
\begin{equation}\label{fiber contraction rate}
   \lambda:=\sup_{n\in\mathbb N} \sup\limits_{(x,z)\in K(r,\varphi)}\left\{\left|\lambda_n(x,z) \right|\right\}<1,
\end{equation}
where $\lambda_n(x,z):=\partial_zh_n(x,z)$.
Set $$F_n:\mathbb T^2\times S^1\rightarrow \mathbb T^2\times S^1, \ (x,z)\mapsto \left(g_nx, h_n(x,z)\right).$$
If $\{h_n\}$ is uniformly bounded in the $C^1$ topology and 
\begin{equation}\label{fiber and base contraction for skew product}
    \Delta:=\sup_{n\in\mathbb N}\left\{\sup\limits_{(x,z)\in K(r,\varphi)}\lambda_n\left(g_n^{-1}x,z\right)\kappa_n(x)\right\}<1,
\end{equation} 
then there is a $C^1$ map $\sigma^*:\mathbb T^2\rightarrow S^1$ such that 

\begin{equation}\label{e: Intersection of cones is graph}
    \bigcap_{n=1}^{\infty}F_1\circ\cdots\circ F_n\left(K(r,\varphi)\right)=\mathrm{graph}(\sigma^*).
\end{equation}
Furthermore, if we assume that the maps $h_n$ are $C^2$, the collection $\{h_n\}$ is uniformly bounded in the $C^2$ topology, and
\begin{equation}\label{holder fiber and base contraction for skew product}
    \Delta_
\beta:=\sup_{n\in\mathbb N}\left\{\sup\limits_{(x,z)\in K(r,\varphi)}\lambda_n\left(g_n^{-1}x,z\right)\kappa_n(x)^{1+\beta}\right\}<1,
\end{equation}
for some $\beta\in(0,1)$, then $\sigma^*$ is $C^{1+\beta}$.
    
\end{theorem}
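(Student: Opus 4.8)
The plan is to realize the forward images as graphs of sections and run a non-stationary fiber-contraction argument, upgrading the regularity one derivative at a time. First I would reformulate everything through the graph transform. Since $F_n(x,z) = (g_n x, h_n(x,z))$ and each $z \mapsto h_n(x,z)$ is a contraction by \eqref{fiber contraction rate} that maps the fiber $K_r(\varphi(x))$ into $K_r(\varphi(g_n x))$ by \eqref{Fiber invariance}, the image of an admissible section's graph is again an admissible section's graph: for a continuous $\sigma$ with $\sigma(x) \in K_r(\varphi(x))$ one has $F_n(\mathrm{graph}(\sigma)) = \mathrm{graph}(\Gamma_n \sigma)$ where
\[
(\Gamma_n\sigma)(y) := h_n\!\left(g_n^{-1}y,\ \sigma(g_n^{-1}y)\right).
\]
Thus, writing $\sigma_n := \Gamma_1 \circ \cdots \circ \Gamma_n\,\sigma_0$, the set $F_1 \circ \cdots \circ F_n(K(r,\varphi))$ is the union of the graphs $\mathrm{graph}(\sigma_n)$ over admissible $\sigma_0$, i.e.\ a sub-band of $K(r,\varphi)$. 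By \eqref{fiber contraction rate}, $\Gamma_n$ is a $\lambda$-contraction in the sup-norm on the complete space of admissible continuous sections, so the fibers of these sub-bands have width $\le 2r\lambda^n \to 0$, and the telescoping differences $\|\sigma_{n+1}-\sigma_n\|_\infty \le 2r\,\lambda^n$ are summable. Hence the nested intersection is the graph of a single continuous section $\sigma^*=\lim_n \sigma_n$, independent of $\sigma_0$, which establishes \eqref{e: Intersection of cones is graph} at the $C^0$ level.

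To obtain $C^1$ regularity I would lift the graph transform to the level of slopes. Differentiating $\Gamma_n\sigma$ by the chain rule, if $\sigma$ is $C^1$ with slope field $p=D\sigma$, then $D(\Gamma_n\sigma)(y) = \mathcal L_n(\sigma,p)(y)$ where
\[
\mathcal L_n(\sigma,p)(y) = \Big[(\partial_x h_n) + \lambda_n\, p\Big]\!\big(g_n^{-1}y,\ \sigma(g_n^{-1}y)\big)\, D_y g_n^{-1}.
\]
For fixed $\sigma$ this is affine in $p$, with linear part of norm bounded by $\lambda_n(g_n^{-1}y,\cdot)\,\kappa_n(y) \le \Delta < 1$ by \eqref{fiber and base contraction for skew product}, and with affine term uniformly bounded because $\{h_n\}$ is uniformly bounded in $C^1$ and $\kappa<\infty$. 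Consequently the combined transform $(\sigma,p)\mapsto(\Gamma_n\sigma,\mathcal L_n(\sigma,p))$ preserves a product of the section space with a fixed ball of slope fields and is a skew contraction: contracting in $\sigma$ with rate $\lambda$ and, over each $\sigma$, contracting in $p$ with rate $\Delta$. Applying the non-stationary contraction lemma of Appendix \ref{Technical Lemmas} to the resulting compositions $\mathcal L_1\circ\cdots\circ\mathcal L_n$ (whose moving, section-dependent coefficients are controlled by the uniform convergence $\sigma_n\to\sigma^*$ together with the equicontinuity of $\partial_x h_n$ and $\lambda_n$ coming from the uniform $C^1$ bound), I get that $D\sigma_n$ converges uniformly to a continuous slope field $p^*$, independent of $\sigma_0$. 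Uniform convergence of $\sigma_n$ to $\sigma^*$ together with uniform convergence of $D\sigma_n$ to $p^*$ then forces $\sigma^*\in C^1$ with $D\sigma^*=p^*$.

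For the $C^{1+\beta}$ statement I would track the $\beta$-Hölder seminorm of the slope field through the same transform. Under the stronger hypotheses ($h_n\in C^2$, uniformly $C^2$-bounded) the coefficients $\partial_x h_n$ and $\lambda_n$ are uniformly Lipschitz, so they contribute a uniformly bounded amount to the Hölder seminorm; the key point is that precomposition with $g_n^{-1}$ distorts the $\beta$-Hölder seminorm of $p$ by the factor $\kappa_n^\beta$, so that the Hölder seminorm of $\mathcal L_n(\sigma,p)$ is contracted by a factor bounded by $\lambda_n(g_n^{-1}\cdot,\cdot)\,\kappa_n^{1+\beta} \le \Delta_\beta < 1$, using \eqref{holder fiber and base contraction for skew product}. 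Feeding this into the Hölder version of the contraction lemma in Appendix \ref{Technical Lemmas} keeps the Hölder seminorms of $D\sigma_n$ uniformly bounded along the non-stationary composition, so the uniform limit $p^*=D\sigma^*$ is $\beta$-Hölder and $\sigma^*\in C^{1+\beta}$.

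The hard part will be the non-stationary coupling in the derivative estimate: unlike the stationary section theorem, there is no single fixed point, so one cannot simply invoke a contraction-mapping fixed point. Instead one must show that the infinite composition $\mathcal L_1\circ\cdots\circ\mathcal L_n$ of affine maps with \emph{moving} (section-dependent) coefficients converges uniformly. This is exactly where the uniform $C^1$ (resp.\ $C^2$) bounds on $\{h_n\}$ and the finiteness of $\kappa$ become indispensable: they supply the equicontinuity and the uniform bounds on the affine parts needed to keep the iterates in a fixed ball and to prevent errors from accumulating over the infinite composition, which is precisely the mechanism the abstract lemmas in Appendix \ref{Technical Lemmas} are designed to exploit.
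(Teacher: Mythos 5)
Your proposal is correct and follows essentially the same route as the paper: the $C^0$ graph transform $\Gamma_n$, a non-stationary fiber-contraction argument for the induced affine transform on slope fields (your $\mathcal L_n$ is exactly the paper's $\Psi_n^\sigma$) with contraction rate $\Delta$, and invariance of a bounded $\beta$-H\"older-seminorm set under rate $\Delta_\beta$ for the $C^{1+\beta}$ part. One minor correction: the equicontinuity in $\sigma$ required by the fiber-contraction lemma comes from continuity of each $D h_n$ on a compact set (per-$n$ suffices, as in the paper's Remark \ref{equicontinuity replacement remark}), not from the uniform $C^1$ bound, which only yields boundedness.
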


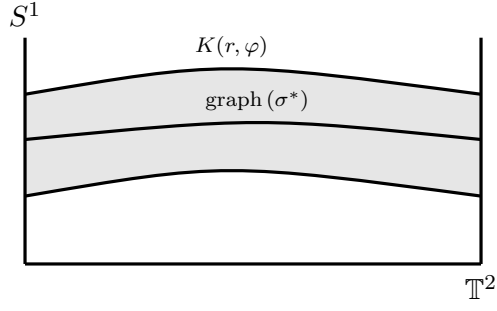
\begin{figure}[h]
   \begin{center}
    
   \begin{tikzpicture}[scale=1.5]

\draw[black, very thick] (-2, 0)-- (-2,2) node[above]{$S^1$};
\draw[black, very thick] (2, 0)-- (2, 2);

\draw[black, very thick] (-2, 0)-- (2, 0) node[below]{$\mathbb T^2$};
\draw[black, very thick] (-2, 1.5) .. controls  (-0.25, 1.8) .. (2,1.5) node[midway, above]{\scriptsize $K(r,\varphi)$};

\draw[black, very thick] (-2, 0.6) .. controls  (-0.25, 0.9) .. (2,0.6);

\draw[black, very thick] (-2, 1.1)  .. controls  (0.05, 1.3) .. (2,1.1) node[midway, above]{\scriptsize $\text{graph}\left(\sigma^*\right)$};

\fill[black, opacity=0.1] 
(-2, 1.5) .. controls  (-0.25, 1.8) .. (2,1.5) -- (2,0.6) .. controls  (-0.25, 0.9) .. (-2, 0.6) -- (-2, 1.5);

\end{tikzpicture}

    \caption{Illustration of Theorem \ref{Torus Section Theorem}}
    \label{Section theorem figure}

    \end{center}
\end{figure}

\begin{proof}
    The proof is broken up into steps:\\
    \\
\noindent\textbf{Step 1: Construction of $\sigma^*$}\\
\\
We note that $X:=\left\{\sigma\in C^0(\mathbb T^2, S^1): \text{graph}(\sigma)\in K(r,\varphi)\right\}$ is a closed and bounded space under the $C^0$ norm. Now, by construction, we have a map $\Gamma_n:X\rightarrow X$ via $\sigma \mapsto h_n\circ (\text{id}, \sigma) \circ g_n^{-1} $. It follows that
\begin{equation}\label{base contraction of graph transform}
    \sup_n\text{Lip}\left(\Gamma_n\right)\leq \lambda,
\end{equation}
so  from (\ref{fiber contraction rate}) and Lemma \ref{non-stationary contraction principle}, there is a unique $\sigma^*$ such that 
$$\lim\limits_{n\rightarrow\infty} \Gamma_1\circ\cdots \circ\Gamma_n(\sigma)=\sigma^*$$
for any $\sigma\in X$. Notice that $F_n\left(\text{graph}(\sigma)\right)=\text{graph}(\Gamma_n\sigma)$ for all $\sigma\in X$ and $n\in\mathbb N$, and hence, (\ref{e: Intersection of cones is graph}) holds.\\
\\
\textbf{Step 2: Lipschitz Continuity of $\sigma^*$}\\
\\
Before showing that $\sigma^*$ is $C^1$, we will first prove that it is Lipschitz, and we will do so by showing that for some $L>0$, the set of maps in $X$ with slope $\leq L$, which is a compact set under the $C^0$ norm, is invariant under $\Gamma_n$ for each $n\in\mathbb N$.

Since $F_n\left(\text{graph}(\sigma)\right)=\text{graph}(\Gamma_n\sigma)$, for all $\sigma\in X$, it will be enough to show that the derivative of each map $F_n$ preserves the horizontal  cone field $\left\{\mathcal K((x,z), L) \right\}_{(x,z)\in\mathbb T^2\times S^1}$ in the tangent bundle $T\left(\mathbb T^2\times S^1\right)$, where
\begin{equation}\label{horizontal cone field in bundle}
    \mathcal K((x,z), L):=\left\{\begin{pmatrix}
v_1\\
v_2
\end{pmatrix}\in T_{(x,z)}\left(\mathbb T^2 \times S^1\right): v_1\in T_x\mathbb T^2, \ v_2\in T_zS^1, \ |v_2|\leq L|v_1|\right\}.
\end{equation} 
Choose $B>0$ so that
\begin{equation}\label{bound for C^1 norm of h_n and g_n}\max\left\{\sup_{n\in\mathbb N}\|h_n\|_{C^1}, \kappa \right\}\leq B.
\end{equation}
Now, choose $L>0$ large enough so that 
$$L>\frac{B^2}{1-\Delta},$$
and so that there exists a Lipschitz function from $\mathbb T^2$ to $S^1$ whose graph is contained in $K(r,\varphi)$ and whose slope is $\leq L$.

Letting $\begin{pmatrix}
v_1\\
v_2
\end{pmatrix}\in \mathcal K((x,z), L)$, we have

$$D_{(x,z)}F_n\begin{pmatrix}
v_1\\
v_2
\end{pmatrix}=\left( \begin{array}{c|c}
   D_xg_n & \boldsymbol{0} \\
   \midrule
   \partial_xh_n(x,z) & \partial_zh_n(x,z) \\
\end{array}\right)\begin{pmatrix}
v_1\\
v_2
\end{pmatrix}=\begin{pmatrix}
D_xg_n(v_1)\\
\partial_xh_n(x,z)(v_1)+\partial_zh_n(x,z)(v_2)
\end{pmatrix}$$
where
$$|v_1|=\left| D_{g_nx}g_n^{-1}\circ D_xg_n(v_1)\right|\leq \kappa_n\left(g_nx\right)\left|D_xg_n(v_1)\right|$$
so that 
\begin{equation}\label{Eqn2}
  \begin{split}
    &|\partial_xh_n(x,z)(v_1)+\partial_zh_n(x,z)(v_2)|\\
    &\leq B|v_1|+\lambda_n(x,z)|v_2|\\
    &\leq \kappa_n(g_nx)B\left|D_xg_n(v_1)\right|+\lambda_n(x,z)L|v_1|\\
    &\leq B^2\left|D_xg_n(v_1)\right|+L\lambda_n(x,z)\kappa\left(g_nx\right)\left|D_xg_n(v_1)\right|\\
    &\leq \left(B^2+\Delta L\right)\left|D_xg_n(v_1)\right|\leq L\left|D_xg_n(v_1)\right|
  \end{split}
\end{equation}
\begin{align*}
    &|\partial_xh_n(x,z)(v_1)+\partial_zh_n(x,z)(v_2)|\leq B|v_1|+\lambda_n(x,z)|v_2|\\
    &\leq \kappa_n(g_nx)B\left|D_xg_n(v_1)\right|+\lambda_n(x,z)L|v_1|\leq B^2\left|D_xg_n(v_1)\right|+L\lambda_n(x,z)\kappa\left(g_nx\right)\left|D_xg_n(v_1)\right|\\
    &\leq \left(B^2+\Delta L\right)\left|D_xg_n(v_1)\right|\leq L\left|D_xg_n(v_1)\right|
\end{align*}
and hence $D_{(x,z)}F_n\begin{pmatrix}
v_1\\
v_2
\end{pmatrix}\in \mathcal K (F_n(x,z), L)$.

Set $X'\subset X$ to be the collection of maps from $X$ whose graphs have slope $\leq L$, which is a compact subset of $X$ in the $C^0$ topology. Now, since $\Gamma_n\left(X'\right)\subset X'$ for all $n\in\mathbb N$, taking $\sigma_0\in X'$ gives that 
$$\lim\limits_{n\rightarrow\infty} \Gamma_1\circ\cdots \circ\Gamma_n(\sigma_0)=\sigma^*\in X'.$$ 
\\
\textbf{Step 3: $C^1$ Regularity of $\sigma^*$}\\
\\
To show that $\sigma^*$ is $C^1$, we will use another graph transform argument to show that its derivative exists and belongs to the space of continuous functions from $\mathbb T^2$ to $L\left(\mathbb R^2, \mathbb R\right)$.

Let $Y=C^0\left(\mathbb T^2, L\left(\mathbb R^2, \mathbb R\right)\right)$ which is a Banach space. For each $n\in\mathbb N$ and $\sigma\in X'$, we define $\Psi_n^\sigma:Y\rightarrow Y$ where for each $H\in Y$ we define
$$\left(\Psi_n^\sigma(H)\right)(x):=D_{\left(g^{-1}_nx, \sigma\left(g^{-1}_nx\right)\right)}h_n\circ\left(\text{Id}, H\left({g_n^{-1}x}\right)\right) \circ D_xg_n^{-1}. $$
Hence, if $\sigma$ is $C^1$, we must have $D(\Gamma_n\sigma)=\Psi_n^{\sigma}(D\sigma)$, and inductively, 
\begin{equation}\label{derivative and graph transform}
    D(\Gamma_k\cdots \Gamma_n\sigma )=\Psi^{\Gamma_{k+1}\cdots \Gamma_n\sigma}_1\circ\cdots\circ \Psi^{\Gamma_n\sigma}_{n-1}\circ \Psi^{\sigma}_n(D\sigma),
\end{equation} 
for all $n,k\in\mathbb N$ with $k\leq n$. 

We will show that Lemma \ref{fiber contraction} applies to the map $\Phi_n: X'\times Y\rightarrow X'\times Y, \ (\sigma, H)\mapsto\left(\Gamma_n\sigma, \Psi^{\sigma}_nH \right)$, and hence there would be a $H^*\in Y$ such that
\begin{equation}\label{Skew product limit}
    \lim\limits_{n\rightarrow\infty} \Phi_1\circ\cdots\circ \Phi_n(\sigma, H)=\lim\limits_{n\rightarrow\infty} \left(\Gamma_1\circ\cdots\circ \Gamma_n \sigma, \Psi^{\Gamma_2\cdots \Gamma_n\sigma}_1\circ\cdots\circ \Psi^{\sigma}_n(H)\right)
     =(\sigma^*, H^*)
\end{equation}
for any $(\sigma,H)\in X'\times Y$. Thus, if $\sigma$ is $C^1$, we have that $D(\Gamma_1\circ\cdot \circ \Gamma_n\sigma)$ converges, and hence $D\sigma^*=H^*$ so that $\sigma^*$ is $C^1$. We now check that $(\Phi_n)$ satisfies the appropriate conditions. 

Condition (\ref{base contraction rate in skew product}) follows from (\ref{base contraction of graph transform}). For each $H,H'\in Y$ and $v\in\mathbb R^2$, we have
\begin{align*}
    &\left(\Psi^{\sigma}_n(H)(x)-\Psi^{\sigma}_n(H')(x)\right)v\\
    &=\begin{pmatrix}
\partial_{x_1}h_n(y, \sigma(y)) \\ \partial_{x_2}h_n(y, \sigma(y)) \\\partial_{z}h_n(y, \sigma(y))
\end{pmatrix}^T	\begin{pmatrix}
0 \\
0 \\
\left[\left(H(y)-H'(y)\right)\circ D_x g_n^{-1}\right](v)
\end{pmatrix}	\\
&=\left[\partial_zh_n(y,\sigma(y))\cdot \left(H(y)-H'(y)\right)\circ D_x g_n^{-1}\right](v),
\end{align*}
where $y=g_n^{-1}x$ and $x=(x_1,x_2)$. Thus, 
\begin{equation}\label{Lipschtiz for Derivative Transform}
   \left\|\Psi_n^{\sigma}(H)(x)-\Psi_n^{\sigma}(H')(x)\right\|\leq \lambda_n(y,\sigma(y))\kappa_n(x)\|H(y)-H'(y)\|,
\end{equation}
and hence condition (\ref{fiber contraction rate in skew product}) follows from (\ref{fiber and base contraction for skew product}). Also, notice that for any $H\in Y$, we have 
$$\Psi_n^{\sigma}(H)(x)-\Psi_n^{\sigma'}(H)(x)=\left(D_{(y, \sigma(y))}h_n-D_{(y, \sigma '(y))}h_n\right)\left((\text{Id}, H)\circ D_{x}g_n^{-1}\right)$$
and so from (\ref{bound for C^1 norm of h_n and g_n}), we have
$$\left\| \Psi_n^{\sigma}(H)(x)-\Psi_n^{\sigma'}(H)(x)\right\|\leq \kappa\left\| D_{(y,\sigma(y))}h_n-D_{(y,\sigma'(y))}h_n\right\|\left\|(\text{Id}, H)\right\|\leq 2B\kappa (1+\|H\|),$$
for any $\sigma, \sigma '\in X'$ and $n\in\mathbb N$ (condition (1) of Lemma \ref{fiber contraction}). In addition, for any $L'>0$, if $H$ belongs to the ball of radius $L'$ in $Y$, we would also have
$$\left\| \Psi_n^{\sigma}(H)(x)-\Psi_n^{\sigma'}(H)(x)\right\|\leq \kappa\left\| D_{(y,\sigma(y))}h_n-D_{(y,\sigma'(y))}h_n\right\|\left(1+L'\right)$$
and since $h_n$ is $C^1$ and $X'$ is compact, we would have that 
$$\lim\limits_{\left\|\sigma-\sigma'\right\|\rightarrow 0} \left\| \Psi^{\sigma}_n(H) -\Psi^{\sigma'}_n(H)\right\|=0,$$
and this limit is uniform in $H\in B_{L'}(0)$ (condition (2) of Lemma \ref{fiber contraction}, also see Remark \ref{equicontinuity replacement remark}).

Since Lemma \ref{fiber contraction} applies to the sequence of maps $(\Phi_n)$, we have that (\ref{Skew product limit}) holds, and hence $\sigma^*$ is $C^1$.\\
\\
\textbf{Step 4: $C^{1+\text{H\"older}}$ Regularity of $\sigma^*$}\\
\\
We now show that $\sigma^*$ is $C^{1+\text{H\"older}}$. Consider the collection of H\"older continuous maps with common constant and exponent $\beta$, which is a closed subspace of $C^0\left(\mathbb T^2, L(\mathbb R^2, \mathbb R)\right)$. The idea is to show that this space is invariant under each $\Phi_n^\sigma$, for any $\sigma\in X'$. This way, assuming $H$ is in this closed space in the limit (\ref{Skew product limit}), we will have that $H^*=D\sigma^*$ is $\beta$-H\"older. We now prove this invariance.

Suppose $\Delta_{\beta}=\sup_{n\in\mathbb N}\left\{\sup\limits_{(x,z)\in K(r,\varphi)}\lambda_n\left(g_n^{-1}x,z\right)\kappa_n(x)^{1+\beta}\right\}<1$ for some $\beta\in(0,1)$. Choose $C>0$ and let $Y'$ be the set of maps $H\in Y$ such that 
$$\limsup_{|x-y|\rightarrow 0}\frac{\|H(x)-H(y)\|}{|x-y|^{\beta}}\leq C$$
which is a topologically closed subspace of $C^0\left(\mathbb T^2, L(\mathbb R^2, \mathbb R)\right)$.

If $\{h_n\}$ is uniformly bounded in the $C^2$ topology, then there is a uniform bound and a uniform Lipschitz constant for the maps $\left\{(x,z)\mapsto D_{(x,z)}h_n \right\}$. Let $M>0$ be a common bound for these quantities.

We will show that if $H\in Y'$, then $\Psi^{\sigma}_n(H)\in Y'$ for all $\sigma\in X'$ and $n\in\mathbb N$. Let $\epsilon>0$ and let $x\in\mathbb T ^2$ be arbitrary. For $x,x'\in \mathbb T^2$, denote $y=g_n^{-1}x$, $y'=g_n^{-1}x'$, $P=D_xg^{-1}_n$, and $P'=D_{x'}g_n^{-1}$. Then, 

\begin{equation*}
  \begin{split}
    \Psi_n^{\sigma}(H(x))-\Psi_n^{\sigma}(H(x'))&=D_{(y,\sigma(y))}h_n\circ (\text{Id}, H(y))P-D_{(y',\sigma(y'))}h_n\circ (\text{Id}, H(y'))P'\\
    &=D_{(y,\sigma(y))}h_n\emb{\circ}\left[(\text{Id},H(y))-(\text{Id},H(y'))\right]P\\
    &\quad +D_{(y,\sigma(y))}h_n\circ (\text{Id}, H(y'))P-D_{(y',\sigma(y'))}h_n\circ (\text{Id}, H(y'))P'.
  \end{split}
\end{equation*}
If $x'$ is sufficiently close to $x$, then
\begin{align*}
    &\left\|D_{(y,\sigma(y))}h_n\emb{\circ}\left[(\text{Id},H(y))-(\text{Id},H(y'))\right]P\right\|\leq |\partial_zh_n(y,\sigma(y))|\cdot\|P\|\cdot\|H(y)-H(y')\|\\
    &\leq (C+\epsilon)\lambda_n(y,\sigma(y))\kappa_n(x)|y-y'|^{\beta},
\end{align*}
and 
\begin{align*}
    &\|D_{(y,\sigma(y))}h_n\circ (\text{Id}, H(y'))P-D_{(y',\sigma(y'))}h_n\circ (\text{Id}, H(y'))P'\|\\
    &\leq \|D_{(y,\sigma(y))}h_n\circ (\text{Id}, H(y'))P-D_{(y',\sigma(y'))}h_n\circ (\text{Id}, H(y'))P\|\\
    &\quad +\|D_{(y',\sigma(y'))}h_n\emb{\circ}(\text{Id}, H(y'))(P-P')\|\\
    &\leq M\cdot |(y,\sigma(y))-(y',\sigma(y')|+\|Dh_n\|\cdot \|(\text{Id}, H(y))\|\cdot \|P-P'\|\\
    &\leq M(\kappa(1+L))|x-x'|+M(1+\|H\|)\kappa |x-x'|.
\end{align*}
Thus, since $1-\beta>0$, we have
\begin{align*}
    &\limsup_{x'\rightarrow x} \frac{\left\|\Psi_n^{\sigma}(H(x))-\Psi_n^{\sigma}(H(x'))\right\|}{|x-x'|^{\beta}}\\
    &\leq \limsup_{x'\rightarrow x}\left((C+\epsilon)\lambda_n(y,\sigma(y))\kappa_n(x)\left(\frac{|y-y'|}{|x-x'|}\right)^{\beta}+M\kappa(2+L+\|H\|)\left|x-x'\right|^{1-\beta}\right)\\
    &\leq (C+\epsilon)\lambda_n(y,\sigma(y))\kappa_n(x)^{1+\beta}.
\end{align*}
Since $\epsilon$ and $x$ were arbitrary and $\Delta_\beta<1,$ we have that $\Psi_n^\sigma(H)\in Y'$.
Hence, choosing a $H\in Y'\cap Y$ in the limit (\ref{Skew product limit}) gives that $H^*=D\sigma^*$ is of class $C^{\beta}$.
\end{proof}

\begin{remark}\label{torus section alternate holder condition}
    To show that $\sigma^*$ is $C^{1+\beta}$, the assumption that $\{h_n\}$ is uniformly bounded in the $C^2$ topology can be replaced with the condition that each $h_n$ is $C^{1+\epsilon}$, with common H\"older constant, for some $\epsilon>\beta$.
\end{remark}
\section{Proof of Main Result}\label{Proof}

Before proving Theorem \ref{Smoothness of Splitting}, we relate it to the setting of Theorem \ref{Torus Section Theorem}. Consider the Grassmanian bundle $\text{Gr}_1\left(T\mathbb T^2\right)$ with projection map $p:\text{Gr}_1\left(T\mathbb T^2\right)\rightarrow \mathbb T^2$. If $E\subset T\mathbb T^2$ is a one-dimensional section, then we will denote $\widehat E\subset G_1\left(T\mathbb T^2\right)$ to be its projective image. If $f$ is a diffeomorphism of $\mathbb T^2$, by an abuse of notation, we will denote $D_xf$, and $Tf$, to be the naturally induced action of the differential, and tangential, of $f$ on $\text{Gr}_1\left(T_x\mathbb T^2\right)$, respectively. 

Each fiber $p^{-1}(x)$ is a copy of $\mathbb R\mathbb P^1$ and is hence diffeomorphic to $S^1$. Let us write this given diffeomorphism by $\phi_x: \text{Gr}_1\left(T_x\mathbb T^2\right)\rightarrow S^1$ and set 
$$\phi:\text{Gr}_1\left(T\mathbb T^2\right)\rightarrow \mathbb T^2\times S^1, \ l\mapsto \left(p(l), \phi_{p(l)}(l) \right).$$
We now prove Theorem \ref{Smoothness of Splitting}.
\begin{proof}[Proof of Theorem \ref{Smoothness of Splitting}] 

Suppose that $\{f_n\}$ satisfies a common cone condition as in Section \ref{background and preliminaries}. We want to show that $\widehat{ E^s\left(\overline f\right)}$ is the image of a $C^1$ section of $\text{Gr}_1\left(T\mathbb T^2\right)$. Let us set $g_n:=f_n^{-1}$ and $h_n:\mathbb T^2\times S^1\rightarrow S^1$ via $$h_n(x,z)= \phi_{f_n^{-1}x}\circ D_xf_n^{-1}\circ \phi_x^{-1}(z),$$
and 
$$F_n:\mathbb T^2\times S^1\rightarrow \mathbb T^2\times S^1, \ (x,z)\mapsto\left( g_n x, h_n(x,z)\right) $$ 
so that
\begin{equation}\label{commutative maps}
    \phi\circ Tf_n^{-1}=F_n\circ \phi
\end{equation}
for all $n\in\mathbb N$.

Notice that if we set $\widehat K^s(x)\subset \text{Gr}_1\left( T_x\mathbb T^2 \right)$ to be the set one dimensional vector spaces contained in $K^s(x)$, then $\phi_x\left( \hat K^s(x)\right)$ is a closed interval in $S^1$ of length less than $\frac{1}{2}$ and this length is independent of $x$, and hence $$K:=\left \{(x,z)\in \mathbb T^2\times S^1: z\in \phi_x\left( \hat K^s(x)\right)\right\} $$ is a band in the sense of Section \ref{Torus Section Problem}. 

 From the fact that each $f_n$ is a $C^2$ diffeomorphism and assuming (\ref{C^2 norms bounded condition}), we have that $\left\{g_n^{-1}\right\}$ is uniformly bounded in the $C^1$ topology, each map $h_n$ is $C^1$, and $\{h_n\}$ is uniformly bounded in the $C^1$ topology. We now check that Theorem \ref{Torus Section Theorem} applies to our sequence of maps $(F_n)$.

From the cone conditions, and the fact that $S^1$ is one-dimensional, we know that the (pointwise) contraction of $h_n\restriction_{K}$ is stronger than the expansion that can be applied by $f_n$ in the base space $\mathbb T^2$. Strictly speaking, we have
\begin{equation}\label{e: Fiber and base contraction}
   \sup_{(x,z)\in K}\left|\partial_zh_n(f_nx, z)\right|\leq   \sup_{(x,z)\in K}\left\|D_xf_n\right\|\cdot \left|\partial_zh_n(f_nx, z)\right|<\frac{1}{\eta}
 \end{equation}
for each $(x,z)\in K$ so that conditions (\ref{fiber contraction rate}) and (\ref{fiber and base contraction for skew product}) hold since $\eta>1$. Condition (\ref{Fiber invariance}) follows from the cone field invariance (\ref{invariance of the cone field}). Thus, by applying Theorem \ref{Torus Section Theorem}, we have that there is a $C^1$ map $\sigma^*:\mathbb T^2\rightarrow S^1$ such that
$$\bigcap_{n=1}^{\infty}F_1\circ \cdots \circ F_n(K)=\text{graph}\left(\sigma^*\right). $$
It follows from (\ref{stable bundle as pullback of cone field}) and (\ref{commutative maps}) that 
$$\widehat{E^s\left(\overline f\right)}=\phi^{-1}\left(\text{graph}\left(\sigma^*\right)  \right),$$
 which proves the $C^1$ case. Notice that if we choose $\beta>0$ so small that 
$$k^{\beta}\eta^{-1}<1$$
where $\kappa:=\sup_{n\in\mathbb N}\sup_{x\in\mathbb T^2}\left\|D_xg_n^{-1}\right\|$, then we obtain
$$\sup_{(x,z)\in K}\left\|D_xf_n\right\|^{1+\beta}\cdot \left|\partial_zh_n(f_nx, z)\right|<\frac{\kappa^{\beta}}{\eta}, $$
so that condition (\ref{holder fiber and base contraction for skew product}) holds. 
Further assuming that each $f_n$ is a $C^3$ diffeomorphism and (\ref{C^3 norms bounded condition}), implies that $\sigma^*$ is $C^{1+\beta}$ by another application of Theorem \ref{Torus Section Theorem}. 
\end{proof}

\begin{remark}
    By Remark \ref{torus section alternate holder condition}, the $C^{1+\text{H\"older}}$ version can be obtained by replacing assumption (\ref{C^3 norms bounded condition}) with the condition that each $f_n$ is $C^{2+\epsilon}$, with common H\"older constant, for some $\epsilon>0$.
\end{remark}
\section{Non-$C^1$ Stable Foliations}\label{Non-example section}
We now construct an example of a sequence $\overline f=(f_n)$, $f_n\in \text{Diff}^2\left(\mathbb T^2 \right)$, such that $\{f_n\}$ satisfies a common cone condition but whose $C^2$ norms are not uniformly bounded, and such that the stable foliation $\mathcal W^s\left(\overline f \right)$ is not $C^1$. 

The example is related to the techniques in \cite{PPR}, where it is proved that most $C^1$ Anosov diffeomorphisms have non-differentiable stable and unstable foliations. We sketch the construction as follows: 

Let $A:\mathbb T^2\rightarrow \mathbb T^2$ be a volume preserving linear hyperbolic map, with unstable eigenvalue $\eta>1$, whose stable and unstable directions are orthogonal. Then, $A$ satisfies a cone condition with respect to cone fields $ K^s:=\left\{K^s(x)\right\}_{x\in\mathbb T^2}$ and $K^u:=\left\{K^u(x)\right\}_{x\in\mathbb T^2}$ , where each stable cone $K^s(x)$ has a small interior angle $\theta$ and whose center line is $E^s(x)$, and $K^u(x)=\left(K^s(x)\right)^{\perp}\emb{:=\left\{v\in T_x\mathbb T^2: v\perp w \ \text{for some} \  w\in K^s(x)\right\}}$.

Consider the local stable manifold $W^s_{\text{loc}}(0,A)$, which is a line through $0$, and two perpendicular lines $\tau$ and $\tau'$ that intersect this line uniquely. Let us set $p:=\tau\cap W^s_{\text{loc}}(0,A)$, $p':=\tau'\cap W^s_{\text{loc}}(0,A),$ $p_n=A^np$, and $p_n'=A^np'$. Then, we know
$$\left|p_n-p_n'\right|=\eta^{-n}\left|p-p'\right|.$$
For each $n\in\mathbb N$, consider a rectangle $R_n$ with center $A^np$ whose height (length of sides parallel to $E^u$) is the same as $\left|p_n-p_n'\right|$ but whose  base (length of sides parallel to $E^s$) is $\frac{\left|p_n-p_n'\right|}{100}$.

We will also consider a rectangle $R'_n$ with center $p_n'$ whose height is $\eta^{-n}\left(\left|p-p'\right|+0.1\right)$ and whose base is also $\frac{\left|p_n-p_n'\right|}{100}$. If we choose $\theta$ small enough, then we can ensure that the following two properties hold:
\begin{itemize}
    \item[(P1)] The cones $K^u(p_n)\cap R_n$ and $K^u\left(p_n'\right)\cap R'_n$ fit inside of $R_n$ and $R_n'$, respectively, as in Figure \ref{Non-example Figure 1}
    \item[(P2)] For any $C^1$ foliation $\mathcal F$ tangent to $K^s$ we have the following: If $\kappa\subset R_n$ and $\kappa'$ are two curves containing $p_n$ and $p_n'$, respectively, and these curves are both tangent to $K^u$, then the holonomy map $H:\kappa \rightarrow \kappa'$, with respect to $\mathcal F$, satisfies $H(\kappa)\subset R_n'$
\end{itemize}

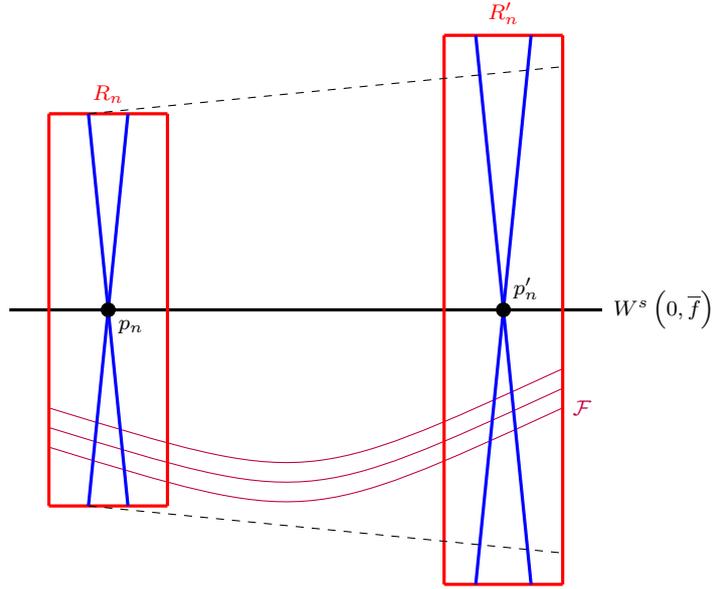
\begin{figure}[h]
   \begin{center}
    
   \begin{tikzpicture}[scale=5.2]
\draw[purple] (-0.65 ,-0.25) .. controls (0, -0.45) .. (0.65,-0.15);
\draw[purple] (-0.65 ,-0.35) .. controls (0, -0.55) .. (0.65,-0.25) node[right]{\scriptsize $\mathcal F$};

\draw[purple] (-0.65 ,-0.3) .. controls (0, -0.5) .. (0.65,-0.2);

\draw[black, very thick] (-0.75, 0) -- (.75,0) node[above, right]{\scriptsize $W^s\left(0, \overline f\right)$};


\draw[red, very thick] (-.65,-.5) -- (-.65, .5);
\draw[red, very thick] (-.65,.5) -- (-.35, .5) node[midway, above]{\scriptsize $R_n$};

\draw[red, very thick] (-.65,-.5) -- (-.35, -.5);

\draw[red, very thick] (-.35,-.5) -- (-.35, .5);

\draw[red, very thick] (.65,-.7) -- (.65, .7);
\draw[red, very thick] (.65,.7) -- (.35, .7) node[midway, above]{\scriptsize $R_n'$};

\draw[red, very thick] (.65,-.7) -- (.35, -.7);

\draw[red, very thick] (.35,-.7) -- (.35, .7);


\draw[blue, very thick] (-0.45,0.5) -- (-0.55, -0.5);

\draw[blue, very thick] (-0.45, - 0.5) -- (-0.55, 0.5);

\draw[blue, very thick] (0.43, - 0.7) -- (0.57, 0.7);

\draw[blue, very thick] (0.43,  0.7) -- (0.57, -0.7);


\draw[black, dashed]  (-0.55, 0.5) -- (0.65, 0.62);

\draw[black, dashed]  (-0.55, -0.5) -- (0.65, -0.62);


\filldraw[black] (-0.5,0) circle (.5pt) node[below right]{\scriptsize $p_n$};

\filldraw[black] (0.5,0) circle (.5pt) node[above right]{\scriptsize $p_n'$};



\end{tikzpicture}

    \caption{Illustration of rectangles $R_n$ and $R_n'$. The blue lines represent the boundaries of the unstable cones $K^u(p_n)\cap R_n$ and $K^u\left(p_n'\right)\cap R_n'$. The leaves of $\mathcal F$ connecting $\kappa$ and $H(\kappa)$ as in property (P2) must stay between the dashed lines, since $\kappa$ would remain in the left cone, which guarantees that property (P2) holds.}
    \label{Non-example Figure 1}

    \end{center}
\end{figure}

There is an $\epsilon>0$ such that if $\|f-A\|_{C^1}<\epsilon$, then $f$ satisfies the same cone condition as $A$ with respect to $K^s$ and $K^u$. Now, we may construct a sequence of $C^2$ diffeomorphisms $(\varphi_n)$, $\varphi_n:\mathbb T^2\rightarrow \mathbb T^2$, such that for some small $b>0$, we have
\begin{itemize}
\item $\varphi_n\restriction_{W^s_{\text{loc}}(0,A)}=\text{id}$
    \item $\|\varphi_n-\text{id}\|_{C^1}<\frac{1}{2}\epsilon$
     \item $\|D_x\varphi_n v\|\geq\left(1+b\right)\|v\|$ for all $v\in K^u(x)$ and $x\in R_n$ 
    \item $D_x\varphi_n=\text{Id}$ for all $x\in R_n'$ 
\end{itemize}

The third and fourth items are simultaneously possible since we constructed the rectangles $R_n$ and $R'_n$ in such a way that their sizes are proportional to $\left|p_n-p_n'\right|$.

Now, set $f_n:=\varphi_n\circ A$. Then, since $\|f_n-A\|_{C^1}<\epsilon$, we have that the collection of maps $\{f_n\}$ satisfies the same cone condition as $A$ but notice that $\left\{\|f_n\|_{C^2} \right\}$ is not uniformly bounded. 

Let us suppose that $\mathcal W^s\left(\overline f\right)$ is a $C^1$ foliation, and consider the holonomy map $H:\tau \rightarrow \tau'$. Since $H$ is $C^1$, by the Mean Value Theorem, for any segment $I\subset \tau$ containing $p$, we have 
\begin{equation}\label{MVT in example}
\text{length}\left(H\left(I\right)\right)\leq \|DH\|\cdot\text{length}(I).
\end{equation}
Let us choose $N$ so that 
\begin{equation}\label{Choice of large N in example}
    \left(1+b\right)^N\gg \|D\emb{H}\|.
\end{equation}

By the first property (P1) of the rectangle $R_N$, we can pick a segment $I\subset \tau$ so that $f_N\circ\cdots \circ f_1(I)$ is a curve connecting the base sides of $R_N$. In this case, the curve $f_N\circ\cdots \circ f_1(I)$ is contained in $K^u\left(p_N\right)\cap R_{N}$ and $\text{length}\left(f_N\circ\cdots \circ f_1(I)\right)\approx \left|p_N-p_N'\right|$.

For each $n\in\mathbb N$, denote $\tau_n:=f_n\circ\cdots\circ f_1(\tau)$ and $\tau'_n:=f_n\circ\cdots\circ f_1\left(\tau'\right)$. Notice that $f_N\circ \cdots \circ f_1\left(\mathcal W^s\left(\overline f\right)\right)$ is a $C^1$ foliation with leaves tangent to $K^s$. If we consider the holonomy map $H_N:\tau_N\rightarrow \tau_N'$ with respect to this foliation, then since its leaves are tangent to $K^s$, we must have that $H_N\left(f_N\circ\cdots \circ f_1(I)\right)\subset R_N'$, by Property (P2). Also, since $\tau_N$ and $\tau_N'$ are both tangent to $K^u$, then by the geometry of the cones, we have $\text{length}\left(f_N\circ\cdots \circ f_1(I)\right)\asymp \text{length}\left(H_N\left(f_N\circ\cdots \circ f_1(I)\right)\right)$ (see Figure \ref{Non-example Figure 2}). Precisely speaking, 
\begin{equation}\label{comparing transversal lengths}
    c\cdot\text{length}\left(f_N\circ\cdots f_1(I)\right)\leq \text{length}\left(H_N\left(f_N\circ\cdots \circ f_1(I)\right)\right),
\end{equation}
where $c>0$ is some small constant that only depends on the choice of cones and rectangles. We have
$$f_N\circ\cdots \circ f_1\left(H\left(I\right)\right)=H_N\left( f_N\circ\cdots \circ f_1\left(I\right)\right),$$
and hence, from (\ref{comparing transversal lengths}) and construction of $(\varphi_n)$, we have
\begin{align*}
&\text{length}\left(H\left(I\right)\right)=\eta^{-N}\text{length}\left(  f_N\circ\cdots \circ f_1\left(H\left(I\right)\right) \right)\\
&=\eta^{-N}\text{length}\left( H_N\left( f_N\circ\cdots \circ f_1(I)\right) \right)\geq c\eta^{-N}\text{length}\left(f_N\circ\cdots \circ f_1(I)\right)\\
&\geq c\eta^{-N}\left(1+b\right)^N\eta^{N}\text{length}\left(I\right)=c(1+b)^N\text{length}(I),
\end{align*}
but by choice of $N$ in (\ref{Choice of large N in example}), this contradicts (\ref{MVT in example}). 

\begin{figure}[h]
   \begin{center}
    
   \begin{tikzpicture}[scale=5.2]
\draw[purple] (-0.65 ,-0.25) .. controls (0, -0.05) .. (0.65,-0.15);
\draw[purple] (-0.65 ,-0.35) .. controls (0, -0.15) .. (0.65,-0.25) node[right]{\scriptsize $f_N\circ\cdots \circ f_1\left(W^s\left( \overline f\right)\right)$};

\draw[purple] (-0.65 ,-0.3) .. controls (0, -0.1) .. (0.65,-0.2);

\draw[black, very thick] (-0.75, 0) -- (.75,0) node[above, right]{\scriptsize $W^s\left(0, \overline f\right)$};


\draw[blue, dashed] (-0.4, 0.5) -- (-.6, -0.5);

\draw[blue, dashed] (-.6, 0.5) -- (-.4, -0.5);

\draw[blue, dashed] (0.4, 0.5) -- (.6, -0.5);

\draw[blue, dashed] (.6, 0.5) -- (.4, -0.5);

\filldraw[black] (-0.5,0) circle (.5pt) node[below right]{\scriptsize $p_N$};

\filldraw[black] (0.5,0) circle (.5pt) node[above right]{\scriptsize $p_N'$};


\draw[red, dashed] (-0.45,0.45) -- (0.47, 0.266);

\draw[red, dashed] (-0.45,-0.45) -- (0.44, -0.272 );


\filldraw[black] (-0.45,0.45) circle (.25pt);

\filldraw[black] (0.47, 0.266) circle (.25pt);

\filldraw[black] (-0.45,0.45) circle (.25pt);

\filldraw[black] (-0.45,-0.45) circle (.25pt);

\filldraw[black] (0.49, -0.262 ) circle (.25pt);


\draw[black, very thick] (-0.45,0.45) .. controls (-0.5, 0) .. (-0.45,-0.45);

\draw[black, very thick] (0.47, 0.266) .. controls (0.5, 0) .. (0.49, -0.262);


\end{tikzpicture}

    \caption{Illustration of $f_N\circ\cdots \circ f_1(I)$ and $H_N\left(f_N\circ\cdots \circ f_1(I)\right)$. These curves remain within the dashed blue lines since they are tangent to $K^u$. The leaves from $f_N\circ \cdots \circ f_1\left(W^s\left(\overline f \right)\right)$ connecting these curves remain within the red dashed lines since they are tangent to $K^s$. Using the geometry of these cones, one can establish (\ref{comparing transversal lengths}).}
    \label{Non-example Figure 2}

    \end{center}
\end{figure}
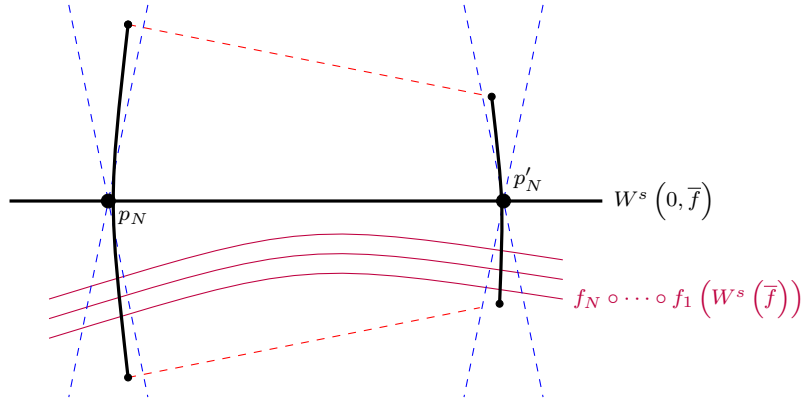

\section{The Stable Codimension 1 Case}\label{s: The Codimension 1 Case}
In this section we present an improved version of Theorem \ref{Main Foliation Theorem}. Let $d>1$ and suppose that $\overline f:=(f_n)$, $f_n:\mathbb T^d\rightarrow \mathbb T^d$, is a sequence of $C^1$ Anosov diffeomorphisms such that $\{f_n\}$ satisfies a common cone condition as in Section \ref{background and preliminaries}, but instead, we impose 
\begin{equation}
    \text{dim}(H_x)=d-1 \ \text{and} \ \text{dim}(V_x)=1.
\end{equation}
Then, the same stable manifold theory applies, which says that for each $x\in \mathbb T^d$, the stable set $W^s\left(x,\overline f\right)$ is the image of a $C^1$ immersion from $\mathbb R^{d-1}$ to $\mathbb T^d$ and that $\mathcal W^s\left(\overline f\right)$ is a $(d-1)$-dimensional continuous foliation of $\mathbb T^d$. Moreover, $E^s\left(x,\overline f\right)$ is a $(d-1)$-dimensional subspace of $T_x\mathbb T^d$ that varies continuously in $x$. We prove

\begin{theorem}\label{Smoothness of Splitting Codimensiona 1} As above, if $f_n\in \mathrm{Diff}^2\left(\mathbb T^d\right)$ for all $n\in\mathbb N$ and
\begin{equation}\label{e: C^2 norms bounded condition}
    \max\left\{\sup_{n}\left\|f_n\right\|_{C^2}, \sup_{n}\left\|f_n^{-1}\right\|_{C^2}\right\}<\infty,
\end{equation}
then the bundle $E^s\left(\overline f\right)$ is $C^1$. If $f_n\in \mathrm{Diff}^3\left(\mathbb T^d\right)$ for all $n\in\mathbb N$, and 
\begin{equation}\label{e: C^3 norms bounded condition}
    \max\left\{\sup_{n}\left\|f_n\right\|_{C^3}, \sup_{n}\left\|f_n^{-1}\right\|_{C^3}\right\}<\infty,
\end{equation}
then the bundle is $C^{1+\beta}$ for some $\beta\in (0,1)$.
\end{theorem}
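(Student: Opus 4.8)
The plan is to reduce Theorem~\ref{Smoothness of Splitting Codimensiona 1} to exactly the same skew-product setup used for Theorem~\ref{Smoothness of Splitting}, replacing the circle fiber $S^1 = \mathbb{RP}^1$ by the Grassmannian $\mathrm{Gr}_{d-1}(\mathbb R^d)$ of $(d-1)$-dimensional subspaces. Because the stable directions have codimension $1$, there is a natural duality: a $(d-1)$-plane is cut out by a single covector, so $\mathrm{Gr}_{d-1}(\mathbb R^d)\cong \mathbb{RP}^{d-1}$ via the annihilator map $E \mapsto E^{\perp}$ (or $E \mapsto \mathrm{Ann}(E)$ in the cotangent bundle). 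The key observation is that this is still a \emph{one-dimensional} family of fiber coordinates in the sense that matters: the derivative $\partial_z h_n$ in the fiber will act as the transpose/inverse-dual of $D_xf_n^{-1}$ restricted to the relevant directions, and the codimension-$1$ condition guarantees the fiber map is a genuine contraction whose rate is controlled by the expansion $\eta$ in the base, precisely as in inequality~(\ref{e: Fiber and base contraction}).

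First I would set $g_n := f_n^{-1}$ and build the fiber map $h_n$ on $\mathrm{Gr}_{d-1}\big(T\mathbb T^d\big)$ induced by the action of $Tf_n^{-1}$ on $(d-1)$-planes, using the stable cone field to define the invariant band $K$ exactly as in Section~\ref{Proof}. The main adjustment is that the abstract section theorem (Theorem~\ref{Torus Section Theorem}) was stated with $S^1$-valued fibers; I would either (i) reprove it verbatim with $S^1$ replaced by the compact manifold $\mathrm{Gr}_{d-1}(\mathbb R^d)$, noting that every step — the graph transform $\Gamma_n$, the cone-invariance estimate~(\ref{Eqn2}), the derivative transform $\Psi_n^\sigma$, and the H\"older estimate in Step~4 — only used that the fiber is a bounded manifold admitting local charts and that $\lambda_n = \|\partial_z h_n\|$ is the relevant fiber-contraction constant, or (ii) work in local charts of the Grassmannian, where a $(d-1)$-plane near a reference plane is the graph of a linear map $\mathbb R^{d-1}\to\mathbb R$, i.e.\ a single row vector, making the chart genuinely one-dimensional in codimension and letting me invoke Theorem~\ref{Torus Section Theorem} essentially unchanged. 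Either way the contraction and derivative-transform lemmas in Appendix~\ref{Technical Lemmas} apply without modification since they are stated for maps of Banach spaces.

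The crucial quantitative step is verifying the analogues of~(\ref{fiber contraction rate}), (\ref{fiber and base contraction for skew product}), and~(\ref{holder fiber and base contraction for skew product}). Here the codimension-$1$ hypothesis does the essential work: since the stable cone $K^s(x)$ has a $1$-dimensional complementary (unstable) direction along which $f_n$ expands by at least $\eta$, the induced action of $Tf_n^{-1}$ on nearby $(d-1)$-planes contracts toward $E^s$ at a rate governed by the ratio of the weakest stable contraction to the unstable expansion. Concretely I would show $\sup_{(x,z)\in K}\|D_xf_n\|\cdot|\partial_z h_n(f_n x, z)| < \eta^{-1} < 1$ exactly as in~(\ref{e: Fiber and base contraction}), which yields both the $C^1$ contraction conditions and, after choosing $\beta>0$ small with $\kappa^\beta \eta^{-1} < 1$, the H\"older condition~(\ref{holder fiber and base contraction for skew product}) under the $C^3$ bound~(\ref{e: C^3 norms bounded condition}). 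The uniform $C^2$ (resp.\ $C^3$) norm bounds on $\{f_n,f_n^{-1}\}$ transfer directly to uniform $C^1$ (resp.\ $C^2$) bounds on $\{h_n\}$, since the Grassmannian action is a smooth algebraic function of the entries of $D_xf_n^{-1}$ and its inverse.

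The main obstacle I anticipate is purely the bookkeeping of replacing a scalar fiber coordinate by a chart on $\mathrm{Gr}_{d-1}(\mathbb R^d)$ while keeping $\partial_z h_n$ interpretable as a single contraction factor. This is why codimension $1$ is genuinely necessary for the clean argument: when the stable bundle has codimension $1$, the relevant Grassmannian fiber is $\mathbb{RP}^{d-1}$ but the \emph{graph-transform} estimate only ever compares a $(d-1)$-plane against the single transverse unstable direction, so the fiber contraction is controlled by one expansion rate $\eta$ rather than a spectrum of ratios. For higher-codimension stable bundles the fiber contraction would instead depend on a gap between distinct expansion rates and the scalar estimate~(\ref{e: Fiber and base contraction}) would fail, which is exactly why we restrict to $\dim(V_x)=1$. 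Once the fiber estimate is in place, the conclusion that $\widehat{E^s(\overline f)}=\phi^{-1}(\mathrm{graph}(\sigma^*))$ with $\sigma^*$ of class $C^1$ (resp.\ $C^{1+\beta}$) follows word for word from the proof of Theorem~\ref{Smoothness of Splitting}.
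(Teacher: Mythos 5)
Your proposal takes essentially the same route as the paper: the paper likewise reduces to the skew product $(x,l)\mapsto(g_nx,h_n(x,l))$ on $\mathbb T^d\times \mathrm{Gr}_{d-1}\left(\mathbb R^d\right)$ over the band $K=\{(x,l): l\subset K^s(x)\}$, invokes the section theorem restated for a general compact manifold fiber (Theorem \ref{Improved Section Theorem} --- precisely your option (i)), and obtains the fiber-contraction estimates from the codimension-$1$ structure of the stable cones exactly as in (\ref{e: Fiber and base contraction}). Your closing remark on why higher codimension would fail matches the paper's own observation that one would then need a bunching condition, so the proposal is correct and in full agreement with the paper's (sketched) argument.
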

As before, we obtain the following corollary. 
\begin{corollary}
    As above, (\ref{e: C^2 norms bounded condition}) implies that $\mathcal W^s\left(\overline f\right)$ forms a $C^1$ foliation of $\mathbb T^d$, and (\ref{e: C^3 norms bounded condition}) implies that $\mathcal W^s\left(\overline f\right)$ forms a $C^{1+\beta}$ foliation of $\mathbb T^d$ for some $\beta\in(0,1)$. 
\end{corollary}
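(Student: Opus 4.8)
The plan is to carry out the proof of Theorem \ref{Smoothness of Splitting} almost verbatim, with the projective line bundle $\mathrm{Gr}_1(T\mathbb{T}^2)$ replaced by the Grassmannian bundle $\mathrm{Gr}_{d-1}(T\mathbb{T}^d)$ of $(d-1)$-planes. Since the stable distribution now has codimension $1$ and is transverse to the one-dimensional line field $V_x$, every stable plane is the graph of a linear map $H_x\to V_x$; thus the set of hyperplanes transverse to $V_x$ forms an affine chart of the fiber $\mathrm{Gr}_{d-1}(T_x\mathbb{T}^d)\cong\mathbb{RP}^{d-1}$, which we identify canonically with $L(H_x,V_x)\cong\mathbb{R}^{d-1}$. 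The projectivized stable cone is a bounded region in this chart, and so it plays exactly the role of a \emph{band} $K(r,\varphi)$, now with fibers lying in $\mathbb{R}^{d-1}$ instead of $S^1$.

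The first step is to upgrade Theorem \ref{Torus Section Theorem} to skew products whose fiber is a ball in $\mathbb{R}^{m}$ (here $m=d-1$) rather than $S^1$. The only genuine change is bookkeeping: the scalar fiber derivative $\lambda_n(x,z)=\partial_z h_n$ becomes the $m\times m$ matrix $\partial_z h_n(x,z)$, every occurrence of $|\lambda_n|$ is read as the operator norm $\|\partial_z h_n\|$, and the derivative transform of Step 3 acts on $Y=C^0(\mathbb{T}^d, L(\mathbb{R}^d,\mathbb{R}^{m}))$. With these substitutions the four steps of that proof --- construction through the non-stationary contraction principle, Lipschitz control via the invariant horizontal cone field, $C^1$ regularity via the fiber-contraction lemma, and the Hölder estimate of Step 4 --- all carry over line by line, since each estimate only used submultiplicativity $\|\partial_z h_n(v_2)\|\le\|\partial_z h_n\|\,\|v_2\|$ and never the one-dimensionality of the fiber. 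Correspondingly, hypotheses (\ref{fiber contraction rate}), (\ref{fiber and base contraction for skew product}) and (\ref{holder fiber and base contraction for skew product}) are read with $\|\partial_z h_n\|$ in place of $|\lambda_n|$, and the compactness used in the proof persists because fiber invariance keeps the relevant data inside the (compact) band.

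With this generalized section theorem in hand, I would set $g_n:=f_n^{-1}$ and let $h_n(x,\cdot)$ be the fractional-linear action on the graph chart induced by $T f_n^{-1}$ on $(d-1)$-planes, paralleling (\ref{commutative maps}). Fiber invariance (\ref{Fiber invariance}) is immediate from the cone invariance (\ref{invariance of the cone field}), while the $C^2$ (resp.\ $C^3$) bounds (\ref{e: C^2 norms bounded condition}) (resp.\ (\ref{e: C^3 norms bounded condition})) supply the uniform $C^1$ (resp.\ $C^2$) control of $\{h_n\}$ together with the uniform bound on $\kappa_n=\|Dg_n^{-1}\|$. The resulting section $\sigma^*$ is then $C^1$ (resp.\ $C^{1+\beta}$), its graph equals the projectivized bundle $\widehat{E^s(\overline f)}$ by (\ref{stable bundle as pullback of cone field}), and Theorem \ref{Smoothness of Splitting Codimensiona 1} follows (the stated corollary then being deduced exactly as in the surface case).

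The one place that truly requires higher-dimensional input --- and the step I expect to be the main obstacle --- is the domination estimate replacing (\ref{e: Fiber and base contraction}): one must show that the operator norm of the fiber derivative dominates the base expansion, i.e.\ $\|\partial_z h_n(g_n^{-1}x,z)\|\cdot\kappa_n(x)<1$, together with its $(1+\beta)$-analogue. In the surface case this was automatic because both the fiber contraction and the base expansion were governed by the single rate $\eta$. In codimension $1$ one instead computes $\partial_z h_n$ from the block decomposition of $D_x f_n^{-1}$ relative to $H_x\oplus V_x$ and bounds its operator norm by $\eta^{-1}$ times a factor controlled by $\|Df_n\|$; this is the standard graph-transform contraction estimate for invariant cone fields, and the cone condition together with the uniform norm bounds is precisely what makes it hold uniformly in $n$. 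Once this single inequality is secured, no further structural change is needed.
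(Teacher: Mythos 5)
Your proposal follows the paper's own route: the corollary is deduced from the $C^1$ (resp.\ $C^{1+\beta}$) regularity of the bundle $E^s\left(\overline f\right)$ established in Theorem \ref{Smoothness of Splitting Codimensiona 1}, which the paper likewise obtains by applying a higher-dimensional version of the non-stationary section theorem (Theorem \ref{Improved Section Theorem}) to the induced skew product on the bundle of $(d-1)$-planes, with the codimension-one hypothesis supplying the domination estimate (\ref{e: Fiber and base contraction}) exactly as you flag. The only difference is cosmetic: you coordinatize the fiber by the affine chart $L(H_x,V_x)\cong\mathbb{R}^{d-1}$ of graph coordinates, whereas the paper states its improved section theorem intrinsically for a compact manifold fiber $M=\mathrm{Gr}_{d-1}\left(\mathbb{R}^d\right)$ and a compact set $K$ containing a graph in its interior.
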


\subsection{Improved Non-stationary Section Theorem}
While the proof of Theorem \ref{Smoothness of Splitting Codimensiona 1} is nearly identical to that of Theorem \ref{Smoothness of Splitting}, we need a slightly improved version of Theorem \ref{Torus Section Theorem}.

Let $M$ be a smooth compact Riemannian manifold and $K\subset \mathbb T^d\times M$ a compact set such that for some continuous function $\varphi: \mathbb T^d\rightarrow M$, we have $\text{graph}(\varphi)\subset \text{int}(K)$. Denote $\pi_M:\mathbb T^d\times M\rightarrow M$ to be the projection map and set $K_x:=\pi^{-1}_M(\{x\})\cap K$.
\begin{theorem}\label{Improved Section Theorem}
 Suppose that $(g_n)$, $g_n:\mathbb T^d\rightarrow \mathbb T^d$, is a sequence of $C^1$ diffeomorphisms such that 
 \begin{equation}
  \kappa:=  \sup_{n\in\mathbb{N}}\left\{ \sup_{x\in\mathbb T^d}\kappa_n(x)\right\}<\infty,
\end{equation} 
where $\kappa_n(x)=\left\|D_xg_n^{-1}\right\|$.
Furthermore, suppose $(h_n)$, $h_n:\mathbb T^d\times M\rightarrow M$, is a sequence of $C^1$ maps such that 
\begin{equation}
    h_n(K_x)\subset K_{g_n x}
\end{equation}
and 
\begin{equation}
   \sup_n \sup_{(x,z)\in K}\{\|\lambda_n(x,z)\|\}<1,
\end{equation}
where $\lambda_n(x,z):=D_{(x,z)}(\pi_M\circ h_n)$. Set
\begin{equation}
    F_n:\mathbb T^d\times M\rightarrow \mathbb T^d\times M, \ (x,z)\rightarrow (g_nx, h_n(x,z)).
\end{equation}
If $\{h_n\}$ is uniformly bounded in the $C^1$ topology and 
\begin{equation}\label{e: fiber and base contraction for skew product}
    \Delta:=\sup_{n\in\mathbb N}\left\{\sup\limits_{(x,z)\in K}\lambda_n\left(g_n^{-1}x,z\right)\kappa_n(x)\right\}<1,
\end{equation} 
then there is a $C^1$ map $\sigma^*:\mathbb T^d\rightarrow M$ such that 
\begin{equation}\label{e: Intersection of cones is graph}
    \bigcap_{n=1}^{\infty}F_1\circ\cdots\circ F_n\left(K\right)=\mathrm{graph}(\sigma^*).
\end{equation}
Furthermore, if we assume that the maps $h_n$ are $C^2$, the collection $\{h_n\}$ is uniformly bounded in the $C^2$ topology, and
\begin{equation}\label{e: holder fiber and base contraction for skew product}
    \Delta_
\beta:=\sup_{n\in\mathbb N}\left\{\sup\limits_{(x,z)\in K}\lambda_n\left(g_n^{-1}x,z\right)\kappa_n(x)^{1+\beta}\right\}<1,
\end{equation}
for some $\beta\in(0,1)$, then $\sigma^*$ is $C^{1+\beta}$.
\end{theorem}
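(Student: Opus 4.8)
The plan is to prove Theorem \ref{Improved Section Theorem} by adapting the four-step argument used for Theorem \ref{Torus Section Theorem} almost verbatim, since the only structural change is that the fiber $S^1$ has been replaced by a general compact Riemannian manifold $M$ and the scalar slope bound $|v_2|\le L|v_1|$ must be replaced by a norm bound $\|v_2\|\le L\|v_1\|$ in the tangent spaces $T_zM$. First I would set up the complete metric space $X=\{\sigma\in C^0(\mathbb T^d,M):\mathrm{graph}(\sigma)\subset K\}$, equipped with the $C^0$ (sup) distance induced by the Riemannian metric on $M$; because $\mathrm{graph}(\varphi)\subset\mathrm{int}(K)$ and $K$ is compact, $X$ is nonempty, closed, and bounded. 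The graph transform $\Gamma_n\sigma=h_n\circ(\mathrm{id},\sigma)\circ g_n^{-1}$ maps $X$ into $X$ by the fiber-invariance hypothesis $h_n(K_x)\subset K_{g_nx}$, and its Lipschitz constant is bounded by $\sup_{(x,z)}\|\lambda_n(x,z)\|<1$, so the non-stationary contraction principle (Lemma \ref{non-stationary contraction principle}) produces the unique $\sigma^*$ with the stated intersection property \eqref{e: Intersection of cones is graph}.

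For the Lipschitz and $C^1$ steps, the key point is to replace the horizontal cone field \eqref{horizontal cone field in bundle} with the manifold version $\mathcal K((x,z),L)=\{(v_1,v_2)\in T_x\mathbb T^d\oplus T_zM:\|v_2\|\le L\|v_1\|\}$, and to run the identical block-triangular estimate on $D_{(x,z)}F_n$. The derivative now has the form $\begin{pmatrix} D_xg_n & 0\\ \partial_x h_n & \lambda_n(x,z)\end{pmatrix}$ where $\lambda_n(x,z)=D_{(x,z)}(\pi_M\circ h_n)$ is now a linear map between the fiber tangent spaces rather than a scalar; the estimate of \eqref{Eqn2} carries through with $\|\lambda_n\|$ in place of $|\lambda_n|$ and with \eqref{e: fiber and base contraction for skew product} furnishing $\Delta<1$, so the set $X'\subset X$ of $L$-Lipschitz sections (compact in $C^0$) is $\Gamma_n$-invariant and contains $\sigma^*$. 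The $C^1$ regularity then follows by the same fiber-contraction argument: I would define $\Psi_n^\sigma$ on $Y=C^0(\mathbb T^d,L(\mathbb R^d,T_\bullet M))$ (interpreting the target as the appropriate bundle of linear maps into the fiber tangent spaces, trivialized via a connection or local charts on $M$), verify the Lipschitz bound \eqref{Lipschtiz for Derivative Transform} using $\|\lambda_n\|$, and invoke Lemma \ref{fiber contraction} to obtain a fixed point $H^*=D\sigma^*$; the $C^{1+\beta}$ step likewise reproduces the H\"older invariance computation with \eqref{e: holder fiber and base contraction for skew product} replacing \eqref{holder fiber and base contraction for skew product}.

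The main obstacle, and the only genuinely new issue compared to the $M=S^1$ case, is that the fibers $T_zM$ over distinct points $z\in M$ are no longer canonically identified as they were when $M=S^1$ had a global trivialization of its tangent bundle. Consequently the object $H(x)\in L(\mathbb R^d,T_{\sigma(x)}M)$ lives in a space that varies with $\sigma$, and the differences $H(y)-H(y')$ appearing in the H\"older estimate are not literal subtractions of vectors in a fixed space. The plan to handle this is to fix a finite atlas of charts on $M$ (or, more invariantly, to use parallel transport along short geodesics for the relevant Riemannian connection), which identifies nearby fibers up to a correction term that is $C^1$-small and contributes only lower-order error; since all the relevant estimates are local — $x'\to x$ forces $\sigma(x')\to\sigma(x)$ by Lipschitz continuity established in Step 2, hence $y',y$ and their images stay in a single chart — these identifications introduce bounded, uniformly Lipschitz correction factors that are absorbed into the constant $M$ (the common $C^2$ bound) exactly as the terms $\|P-P'\|$ were in the surface proof. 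I would remark that, because the estimates are purely local and the manifold is compact, a single finite collection of charts with uniformly controlled transition maps suffices to make every step uniform in $n$, so the contraction constants $\Delta$ and $\Delta_\beta$ remain the operative quantities and the conclusion follows as before.
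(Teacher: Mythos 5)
Your proposal is correct and follows essentially the same route as the paper, which itself only offers a one-sentence sketch stating that the argument is identical to that of Theorem \ref{Torus Section Theorem} minus the explicit coordinates on $\mathbb T^2\times S^1$. Your elaboration — including the observation that the only genuinely new point is identifying the varying fiber tangent spaces $T_zM$ via charts or parallel transport so that Lemma \ref{fiber contraction} can be applied on a genuine Banach space — is a faithful and in fact more detailed filling-in of exactly the adaptation the paper intends.
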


\begin{proof}[Sketch of Proof.]
The argument is the same as that of Theorem \ref{Torus Section Theorem}, but without the convenience of writing the coordinates of $\mathbb T^2\times S^1$ and explicit structure of the set $K$.
\end{proof}

\subsection{Proof in the Stable Codimension 1 Case}
We now explain the proof of Theorem \ref{Smoothness of Splitting Codimensiona 1}.
\begin{proof}[Sketch of Proof of Theorem \ref{Smoothness of Splitting Codimensiona 1}.]
    The proof here is along the same line as the proof of Theorem \ref{Smoothness of Splitting} with an application of Theorem \ref{Improved Section Theorem} to the sequence of skew-product maps $(F_n)$ via
    \begin{equation}
        F_n:\mathbb T^d\times \text{Gr}_{d-1}\left(\mathbb R^d\right)\rightarrow \mathbb T^d\times \text{Gr}_{d-1}\left(\mathbb R^d\right), \ (x,l)\mapsto (g_nx, h_n(x,l))
    \end{equation}
over the set
    \begin{equation}
        K:=\left\{(x,l)\in \mathbb T^d\times \text{Gr}_{d-1}\left(\mathbb R^d\right) : l\subset K^s(x)\right\}. 
    \end{equation}
Here, $g_n=f_n^{-1}$ and $h_n(x,\cdot)$ is the action of $D_xf_n^{-1}$ on $\text{Gr}_{d-1}\left(\mathbb R^d\right)$, the space of $(d-1)$-dimensional subspaces of $\mathbb R^d$. 
    The arguments are the same and the estimate (\ref{e: Fiber and base contraction}) also holds in this context again from the cone conditions but now from the fact that each $K^s(x)$ is a cone with codimension $1$.
\end{proof}
\begin{remark}
    We note that with the given assumptions, (\ref{e: Fiber and base contraction}) does not necessary hold if each $H_x$ does not have codimension 1. In the stationary case, this dimension assumption is usually replaced with the so-called \textit{bunching condition} (see, for example, Section 2.2.4 of \cite{BC} for a discussion).
\end{remark}
\appendix
\section{Technical Lemmas}\label{Technical Lemmas}
 \begin{appendixlemma}\label{non-stationary contraction principle}
    Let $X$ be a compact metric space and $(\gamma_n)$ a sequence with $\gamma_n:X\rightarrow X$. If $\sup_{n}\text{Lip}(\gamma_n)<1$, then there is a $x^*\in X$ such that 
    $$\lim\limits_{n\rightarrow\infty}\gamma_1\circ\cdots\circ \gamma_n(x)=x^*$$
    for all $x\in X$
\end{appendixlemma}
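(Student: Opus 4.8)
The plan is to prove a non-stationary contraction principle, where each map $\gamma_n$ is a strict contraction with a \emph{uniform} Lipschitz bound $\lambda := \sup_n \mathrm{Lip}(\gamma_n) < 1$, and I want to show the forward compositions $\gamma_1 \circ \cdots \circ \gamma_n$ converge uniformly to a constant map. The key observation is that composing $n$ contractions, each with constant $\lambda$, contracts distances by a factor $\lambda^n$: for any $x, y \in X$,
\begin{equation*}
  d\bigl(\gamma_1\circ\cdots\circ\gamma_n(x),\, \gamma_1\circ\cdots\circ\gamma_n(y)\bigr) \leq \lambda^n \, d(x,y) \leq \lambda^n \,\mathrm{diam}(X),
\end{equation*}
where $\mathrm{diam}(X) < \infty$ since $X$ is compact. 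Thus the diameter of the image $\gamma_1\circ\cdots\circ\gamma_n(X)$ shrinks geometrically to $0$, which is what forces all starting points to collapse to a common limit.

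First I would fix a basepoint $x_0 \in X$ and define $y_n := \gamma_1 \circ \cdots \circ \gamma_n(x_0)$. I would show $(y_n)$ is Cauchy: for $m > n$, writing $y_m = \gamma_1 \circ \cdots \circ \gamma_n\bigl(\gamma_{n+1}\circ\cdots\circ\gamma_m(x_0)\bigr)$ and applying the $\lambda^n$-contraction estimate above with the two interior points $x_0$ and $\gamma_{n+1}\circ\cdots\circ\gamma_m(x_0)$, I get $d(y_n, y_m) \leq \lambda^n\,\mathrm{diam}(X)$. Since $\lambda^n \to 0$, the sequence $(y_n)$ is Cauchy, and by completeness of the compact (hence complete) metric space $X$ it converges to some $x^* \in X$.

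Finally I would upgrade convergence from the single basepoint to every $x \in X$, and simultaneously verify the limit is independent of $x$. For arbitrary $x \in X$, the triangle inequality together with the contraction estimate gives
\begin{equation*}
  d\bigl(\gamma_1\circ\cdots\circ\gamma_n(x),\, x^*\bigr) \leq d\bigl(\gamma_1\circ\cdots\circ\gamma_n(x),\, y_n\bigr) + d(y_n, x^*) \leq \lambda^n\,\mathrm{diam}(X) + d(y_n, x^*),
\end{equation*}
and both terms on the right tend to $0$ as $n \to \infty$. This shows $\gamma_1\circ\cdots\circ\gamma_n(x) \to x^*$ for every $x$, with the same limit $x^*$. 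I do not anticipate a genuine obstacle here: the only subtlety worth flagging is that the relevant composition order is $\gamma_1\circ\cdots\circ\gamma_n$ (newest map applied first), so the geometric factor $\lambda^n$ comes from the $n$ \emph{outer} contractions acting on the varying inner argument, rather than from a classical single-map Banach iteration; once this is set up correctly the estimate is immediate and uniform in the starting point.
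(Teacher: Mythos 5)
Your proof is correct and complete: the uniform bound $\lambda=\sup_n\mathrm{Lip}(\gamma_n)<1$ makes each outer block $\gamma_1\circ\cdots\circ\gamma_n$ a $\lambda^n$-contraction, the telescoping estimate $d(y_n,y_m)\leq\lambda^n\,\mathrm{diam}(X)$ gives a Cauchy sequence in the complete space $X$, and the final triangle-inequality step correctly shows the limit is independent of the starting point. The paper does not prove this lemma itself --- it only cites Theorem 3 of \cite{LY} --- and your direct argument is exactly the standard one that underlies that reference, so there is nothing substantive to contrast; you have correctly handled the one genuine subtlety, namely that new maps are appended on the inside of the composition so the contraction factor comes from the fixed outer block acting on a varying inner argument.
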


\begin{proof}
See Theorem 3 in \cite{LY}.
\end{proof}

\begin{appendixlemma}\label{fiber contraction}
    Suppose that $X$ is a compact metric space and $\left(Y, \|.\|\right)$ is a Banach space. Suppose that $(\gamma_n)$, $\gamma_n:X\rightarrow X$, is a sequence of maps such that 
    \begin{equation}\label{base contraction rate in skew product}
    \sup_{n\in\mathbb N}\text{Lip}\left(\gamma_n\right)<1,
    \end{equation}
    and for each $x\in X$, we have a sequence of maps $\left(\psi_n^x\right)$, $\psi_n^x:Y\rightarrow Y$, such that
    \begin{equation}\label{fiber contraction rate in skew product}
        \sup_{n\in\mathbb N}\sup_{x\in X}\text{Lip}\left(\psi_n^x\right)<1.
    \end{equation}
    Consider the skew product maps $$\Phi_n:X\times Y\rightarrow X\times Y, (x,y)\mapsto \left(\gamma_n (x), \psi_n^x(y)\right).$$
    If there exists a $M>0$ such that
     \begin{itemize}
        \item[(1)]  $\left\|\psi_n^{x}(y)-\psi_n^{x'}(y)\right\|\leq M\left(1+\|y\|\right)$
        for all $x, x'\in X$ and $y\in Y$
        \item[(2)] For each $L'>0$, we have $\lim\limits_{\left|x-x'\right|\rightarrow0}\left\|\emb{\psi_n^x(y)- \psi_n^{x'}(y)}\right\|=0$ and this limit is uniform  in $y\in B_{L'}(0)$
    \end{itemize}
  then, there exists a $\left(x^*, y^*\right)\in X\times Y$ such that 
   $$\lim\limits_{n\rightarrow\infty}\Phi_1\circ \cdots\circ \Phi_n(x,y)=\left(x^*, y^*\right)$$
   for any $(x,y)\in X\times Y$.
\end{appendixlemma}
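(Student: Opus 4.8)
The plan is to imitate the proof of Lemma~\ref{non-stationary contraction principle} (the non-stationary contraction principle) for the skew products $\Phi_n$, the two new difficulties being that the fibre $Y$ is not compact and that $\Phi_n$ is not a genuine contraction on $X\times Y$, but only a fibrewise contraction whose dependence on the base point is controlled by the mild conditions (1) and (2). Write $\lambda_0:=\sup_n\mathrm{Lip}(\gamma_n)<1$ from (\ref{base contraction rate in skew product}) and $\lambda:=\sup_{n,x}\mathrm{Lip}(\psi_n^x)<1$ from (\ref{fiber contraction rate in skew product}). First I would record the base behaviour: by Lemma~\ref{non-stationary contraction principle} the base component $\gamma_1\circ\cdots\circ\gamma_n(x)$ converges to a point $x^*$ independent of $x$, and, quantitatively, every tail $\gamma_{k+1}\circ\cdots\circ\gamma_n$ is $\lambda_0^{\,n-k}$-Lipschitz, so its image has diameter at most $\lambda_0^{\,n-k}\,\mathrm{diam}(X)$. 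Next I would trap the fibre: using the uniform bound $C_0:=\sup_{n,x}\|\psi_n^x(0)\|<\infty$ (this is what the uniform $C^1$ bounds supply in the application, and some such bound is genuinely needed, since otherwise $\psi_n^x(y)=\tfrac12 y+n$ is a counterexample), one checks $\|\psi_n^x(y)\|\le\lambda\|y\|+C_0$, so the bounded set $X\times\overline B_R$ with $R:=C_0/(1-\lambda)$ is forward invariant under every $\Phi_n$ and attracts every orbit. It therefore suffices to prove that the images $\Phi_1\circ\cdots\circ\Phi_m(X\times\overline B_R)$ shrink to a point.

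The heart of the argument is this diameter estimate, and it is where I expect the real work to be. Unravelling the composition, the fibre component of $\Phi_1\circ\cdots\circ\Phi_m(x,y)$ equals $\psi_1^{\xi_2}\circ\psi_2^{\xi_3}\circ\cdots\circ\psi_m^{\xi_{m+1}}(y)$, where $\xi_{k+1}=\gamma_{k+1}\circ\cdots\circ\gamma_m(x)$ and $\xi_{m+1}=x$. Comparing two starting points $(x,y),(x',y')\in X\times\overline B_R$ and telescoping along the partial tails $W_k=\psi_k^{\xi_{k+1}}\circ\cdots\circ\psi_m^{\xi_{m+1}}(y)$ (with $W_{m+1}=y$) gives
$$\|W_k-W_k'\|\le\lambda\,\|W_{k+1}-W_{k+1}'\|+\delta_k,\qquad \delta_k:=\big\|\psi_k^{\xi_{k+1}}(W_{k+1}')-\psi_k^{\xi_{k+1}'}(W_{k+1}')\big\|,$$
whence $\|W_1-W_1'\|\le\lambda^m(2R)+\sum_{k=1}^m\lambda^{k-1}\delta_k$. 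Here $\xi_{k+1}$ and $\xi_{k+1}'$ both lie in $\gamma_{k+1}\circ\cdots\circ\gamma_m(X)$, a set of diameter $\le\lambda_0^{\,m-k}\mathrm{diam}(X)$; since $W_{k+1}'\in\overline B_R$, the uniform-in-$n$ (equicontinuous) reading of condition (2) bounds $\delta_k\le\omega\big(\lambda_0^{\,m-k}\mathrm{diam}(X)\big)$ for a nondecreasing modulus $\omega$ with $\omega(0^+)=0$, while condition (1) gives the crude bound $\delta_k\le M(1+R)$.

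The crucial point, and the main obstacle, is that these coupling errors are controlled only by continuity, not by a summable Lipschitz bound, so I would estimate $\sum_{k=1}^m\lambda^{k-1}\delta_k$ by splitting at $k=\lfloor m/2\rfloor$. For $k\le m/2$ the argument of $\omega$ is at most $\lambda_0^{m/2}\mathrm{diam}(X)$, so that block is $\le\tfrac{1}{1-\lambda}\,\omega\big(\lambda_0^{m/2}\mathrm{diam}(X)\big)\to0$; for $k>m/2$ one uses the crude bound together with $\lambda^{k-1}\le\lambda^{m/2-1}$ to get a block $\le M(1+R)\tfrac{\lambda^{m/2-1}}{1-\lambda}\to0$. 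Hence $\|W_1-W_1'\|\to0$ uniformly over $X\times\overline B_R$, and together with the base estimate the diameter of $\Phi_1\circ\cdots\circ\Phi_m(X\times\overline B_R)$ tends to $0$.

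Finally, exactly as in Lemma~\ref{non-stationary contraction principle}, for $n\ge m$ one writes $\Phi_1\circ\cdots\circ\Phi_n(x,y)=\Phi_1\circ\cdots\circ\Phi_m\big(\Phi_{m+1}\circ\cdots\circ\Phi_n(x,y)\big)$, where the inner point lies in the invariant ball, so all terms with $n\ge m$ sit inside a set whose diameter is $o(1)$ as $m\to\infty$. The sequence $\big(\Phi_1\circ\cdots\circ\Phi_n(x,y)\big)_n$ is therefore Cauchy in the complete space $X\times Y$, and it converges to a limit $(x^*,y^*)$; since the shrinking sets do not depend on the starting point, the limit is independent of $(x,y)$, which is the assertion of the lemma.
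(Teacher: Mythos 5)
Your proof is essentially correct, but it takes a genuinely different route from the paper: the paper does not prove this lemma at all, it simply cites Theorem 4 of \cite{LY} and observes that the present statement is a special case, whereas you give a direct, self-contained argument. Your scheme --- trap the fibre in a forward-invariant ball, unravel the composition, telescope the fibre differences against the exponentially shrinking base tails $\gamma_{k+1}\circ\cdots\circ\gamma_m(X)$, and split the error sum --- is sound, and the concluding Cauchy/tail argument is the standard one. One small caveat: the split at $k=\lfloor m/2\rfloor$ invokes a single modulus $\omega$ valid for all the maps $\psi_k$, i.e.\ it reads condition (2) as uniform in $n$, whereas the condition as literally written is asserted for each fixed $n$. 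This is harmless: each fixed-$k$ term $\lambda^{k-1}\delta_k$ tends to $0$ as $m\to\infty$ by condition (2) applied to the single map $\psi_k$, and is dominated by the summable $\lambda^{k-1}M(1+R)$ from condition (1), so the whole sum tends to $0$ by dominated convergence with no uniformity in $n$ needed.

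The substantive point is the extra hypothesis $C_0:=\sup_{n,x}\|\psi_n^x(0)\|<\infty$ that you insert. You are right that some such bound is indispensable --- the lemma as stated is false without it --- and right that it is available in the paper's application (there $\|\Psi_n^\sigma(0)\|\le B\kappa$ uniformly); presumably the corresponding hypothesis appears in Theorem 4 of \cite{LY} and was lost in transcription. However, your proposed counterexample $\psi_n^x(y)=\tfrac12 y+n$ does not actually witness the failure: in $\Phi_1\circ\cdots\circ\Phi_n$ the map $\psi_n$ is applied \emph{innermost}, so its large shift is damped by the $n-1$ outer contractions, and the fibre value is $2^{-n}y+\sum_{k=1}^{n}k\,2^{-(k-1)}\to 4$, i.e.\ the conclusion of the lemma holds for that family. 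To break the lemma the shifts must outrun the contraction rate, e.g.\ $\psi_n^x(y)=\tfrac12 y+3^n$ with $X$ a single point: all stated hypotheses hold (conditions (1) and (2) are vacuous since $\psi_n^x$ is independent of $x$), yet the fibre component $2^{-n}y+2\sum_{k=1}^{n}(3/2)^k$ diverges. You should replace the parenthetical counterexample accordingly; with that correction and the added hypothesis, your proof stands.
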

\begin{proof}
    This is a weaker version of Theorem 4 in \cite{LY}, since here, we assume $X$ is compact and condition (1) clearly implies condition (2) in Theorem 4 of \cite{LY}.
\end{proof}

\begin{remarksection}\label{equicontinuity replacement remark}
    Notice that condition (2) can be replaced with the assumption that for each $L'>0$, the family of maps $\left\{x\mapsto \emb{\psi_n^{x}\restriction_{y\in B_{L'}(0)}}\right\}$ is (uniformly) equicontinuous for each $n\in\mathbb N$.
\end{remarksection}

\section{Applications and Open Questions}\label{Applications}
We now explore some open questions regarding spectral properties of Sturmian Hamiltonians, and discuss how these problems are related to regularity results for stable foliations of non-stationary systems.
Consider the bounded self-adjoint operator $H_{\lambda, \alpha, \omega}: \ell^2(\mathbb{Z})\rightarrow \ell^2(\mathbb{Z}) $
via
$$[H_{\lambda, \alpha, \omega}u](n)= u(n+1)+u(n-1)+\lambda\chi_{[1-\alpha, 1)}\left(\omega+n\alpha \ (\text{mod} \ 1)\right)u(n),$$
where $\lambda>0$, $\alpha\in(0,1)$ is irrational, and $\omega\in S^1$. It is well-known (e.g. see Theorem 4.9  \cite{DF}) that the spectrum of $H_{\lambda, \alpha, \omega}$ is independent of $\omega$, so we will denote it by $\sigma_{\lambda, \alpha}$. 
\begin{theoremsection}[\cite{bist}]
    For any $\lambda>0$ and irrational $\alpha$, the spectrum $\sigma_{\lambda, \alpha}$ is a Cantor set of zero Lebesgue measure.
\end{theoremsection}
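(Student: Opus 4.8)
The plan is to reduce the spectral problem to the dynamics of the transfer-matrix cocycle and its associated \emph{trace map}, and then to read off both the Cantor structure and the measure-theoretic smallness from the hyperbolicity of that dynamical system. First I would record the two soft facts: $\sigma_{\lambda,\alpha}$ is a nonempty compact subset of $\mathbb R$ (being the spectrum of a bounded self-adjoint operator), and, by the $\omega$-independence already cited, it suffices to analyze the canonical Sturmian potential $\omega=0$. A Cantor set is a nonempty compact, perfect, nowhere dense set, so the theorem splits into three claims: $\sigma_{\lambda,\alpha}$ has empty interior, has no isolated points, and has vanishing Lebesgue measure. Since zero measure already forces a closed set to be nowhere dense, the genuine content is zero measure together with perfectness.

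Next I would introduce the one-step transfer matrices $T_n(E)=\begin{pmatrix} E-V_n & -1 \\ 1 & 0\end{pmatrix}$ of the eigenvalue equation $u(n+1)+u(n-1)+V_n u(n)=E u(n)$, where $V_n$ denotes the Sturmian potential values, and exploit the continued-fraction hierarchy of $\alpha$ with convergents $p_k/q_k$. The Sturmian combinatorics produce a renormalization: the transfer matrix $M_k(E)$ over the $k$-th approximating period $q_k$ obeys a recursive concatenation rule, and its half-traces $x_k(E):=\tfrac12\,\mathrm{tr}\,M_k(E)$ satisfy a polynomial recursion (the trace map) for which the Fricke–Vogt quantity $x_{k+1}^2+x_k^2+x_{k-1}^2-2x_{k+1}x_k x_{k-1}-1$ is conserved and equals $\lambda^2/4>0$. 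The central lemma I would prove, following Sütő, is the spectral characterization: $E\in\sigma_{\lambda,\alpha}$ if and only if the trace sequence $(x_k(E))_k$ stays bounded. The forward direction uses that an escaping orbit forces $\|M_k(E)\|$ to grow and hence places $E$ in a gap; the reverse uses that boundedness of the traces controls the cocycle along the hierarchy.

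The engine of the argument is an \emph{escape lemma}: because the conserved quantity is strictly positive, once the orbit leaves a fixed compact region—concretely, once consecutive half-traces satisfy $|x_{k-1}|>1$ and $|x_k|>1$—the recursion forces $|x_{k+1}|>|x_k|$ and, iterating, $|x_{k+j}|\to\infty$ super-exponentially. Consequently the set of non-escaped energies at level $k$ is a finite union of closed \emph{bands} $\Sigma_k$, these bands nest in the manner $\sigma_{\lambda,\alpha}=\bigcap_k(\Sigma_k\cup\Sigma_{k+1})$, and the escape is monotone enough that each surviving band subdivides at the next level. Perfectness then follows by showing that every spectral point is a limit of band endpoints: endpoints of gaps lie in the spectrum, gaps open densely, and no band can fail to split, so there are no isolated points.

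The step I expect to be the main obstacle is the quantitative control needed for zero Lebesgue measure. Here I would estimate the total bandwidth $|\Sigma_k|$ and show it tends to $0$. The mechanism is hyperbolicity: on the invariant surface $\{I=\lambda^2/4\}$ the trace map acts as a uniformly hyperbolic horseshoe, its non-wandering set is a product Cantor set, transfer-matrix norms grow exponentially away from it, and a bounded-distortion argument bounds the length of each level-$k$ band by the inverse of the expansion accumulated along the corresponding orbit segment. Summing over the exponentially many but exponentially thin bands yields $|\Sigma_k|\to 0$, hence $|\sigma_{\lambda,\alpha}|=0$. Making the hyperbolicity uniform across the whole spectrum and converting it into sharp bandwidth bounds with explicit distortion constants is the delicate part; for small $\lambda$ the hyperbolicity constants degenerate, which is precisely the regime where the \emph{non-stationary} foliation theory developed in this paper is intended to replace the stationary horseshoe estimates.
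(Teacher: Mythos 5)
First, a caveat: the paper does not prove this statement at all --- it is quoted from \cite{bist} as background in Appendix \ref{Applications} --- so the only fair comparison is with the argument in the cited literature. Your reduction to the trace-map dynamics, the Fricke--Vogt invariant, the S\"ut\H{o}-type characterization of $\sigma_{\lambda,\alpha}$ as the set of energies with bounded trace orbit, and the escape lemma are all the right ingredients, and your observation that the real content is ``zero measure plus perfectness'' is correct.

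The genuine gap is in the step you yourself flag as the obstacle: deriving $|\sigma_{\lambda,\alpha}|=0$ from uniform hyperbolicity of the trace map plus bounded-distortion bandwidth estimates. That mechanism is not available in the generality of the theorem. Uniform hyperbolicity of the non-wandering set is known for a single trace map $T_k\restriction_{S_\lambda}$ (Cantat \cite{Ca}, and \cite{DG1} for small $\lambda$), but the spectrum for a general irrational $\alpha=[a_1,a_2,\ldots]$ is governed by the \emph{non-stationary} composition $T_{a_n}\circ\cdots\circ T_{a_1}$ with possibly unbounded $a_n$; uniform expansion rates, distortion constants, and transversality of $L_\lambda$ to the non-stationary stable lamination in this setting are precisely the content of Problem \ref{NS Trace Map Foliation} and the surrounding conjectures, i.e.\ they are open, and for $\lambda\to 0$ the hyperbolicity constants degenerate as you note. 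So your proof of the measure-zero claim is circular relative to this paper's program and cannot be closed as written. The argument actually used in \cite{bist} (following S\"ut\H{o}) is soft: the boundedness of the trace orbits on the spectrum forces the Lyapunov exponent of the transfer-matrix cocycle to vanish identically on $\sigma_{\lambda,\alpha}$, and Kotani theory for aperiodic ergodic potentials taking finitely many values implies that the set of energies with vanishing Lyapunov exponent has zero Lebesgue measure; no bandwidth counting or hyperbolicity is needed. Relatedly, perfectness is most cleanly obtained from the general fact that the almost-sure spectrum of an ergodic Schr\"odinger family has no isolated points (an isolated point would be an eigenvalue for almost every $\omega$, contradicting the absence of fixed atoms in the point spectrum), rather than from a band-splitting analysis, which for general Sturmian $\alpha$ requires the detailed combinatorics of \cite{Ra}.
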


To analyze the dimension of $\sigma_{\lambda,\alpha}$, especially for small ranges of $\lambda>0$, the universal technique has been to analyze the dynamics of the so-called trace maps. 

\begin{definition}[Trace Maps]
    We define the \textit{trace maps} to be the family of maps given by 
    $$
T_k : \mathbb R^3 \to \mathbb R^3 , \quad \begin{pmatrix} x \\ y \\ z \end{pmatrix} \mapsto \begin{pmatrix} x U_k(y) - z U_{k-1}(y) \\ x U_{k-1}(y) - z U_{k-2}(y) \\ y \end{pmatrix} ,
$$
where $U_k(y)$ are the Chebyshev polynomials  of the second kind. That is, 
$$
 U_{-1} (y)  = 0 , \
 U_0 (y)  = 1 , \
 U_{k+1}(y)  = 2y U_k(y) - U_{k-1}(y).
$$
\end{definition}

    For $\lambda>0$, we consider the family of cubic surfaces given by
$$S_\lambda:=\left \{ (x, \ y, \ z)\in\mathbb{R}^3 : x^2+y^2+z^2-2xyz=1+\frac{\lambda^2}{4} \right\}.$$
Each surface $S_{\lambda}$ is invariant under each trace map and from here, we will consider the restriction of each $T_k$ to $S_\lambda$. This invariance is known as the Fricke-Vogt invariance.

\begin{theoremsection}[\cite{bist, D2000}]
     For $\lambda>0$ and $\alpha=[a_1,a_2,\ldots]\in(0,1)$ irrational, a real number $E$ belongs to $\sigma_{\lambda, \alpha}$ if and only if 
$$\left(T_{a_n}\circ\dots\circ T_{a_1}\left(\frac{E-\lambda}{2}, \frac{E}{2}, 1\right)\right)_{n\in\mathbb{N}}$$
is bounded.
\end{theoremsection}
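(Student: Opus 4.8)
The plan is to translate the spectral problem into the dynamics of $\mathrm{SL}(2,\mathbb{R})$ transfer matrices and then identify the induced trace dynamics with the family $(T_k)$. First I would rewrite the eigenvalue equation $H_{\lambda,\alpha,\omega}u=Eu$ as the first-order recursion $\binom{u(n+1)}{u(n)}=A_n\binom{u(n)}{u(n-1)}$, where $A_n=\left(\begin{smallmatrix} E-\lambda V(n) & -1\\ 1 & 0\end{smallmatrix}\right)\in\mathrm{SL}(2,\mathbb{R})$ and $V(n)=\chi_{[1-\alpha,1)}(\omega+n\alpha)$. Since the spectrum is $\omega$-independent, I may fix the canonical $\omega$ producing the one-sided Sturmian sequence of $\alpha$ and study the growth of the products $M(n,E)=A_n\cdots A_1$. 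The two single-site matrices $A_1:=A(1)$ and $A_0:=A(0)$ (potential values $1$ and $0$) have half-traces $\frac{E-\lambda}{2}$ and $\frac{E}{2}$, which will furnish the first two entries of the initial triple.

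Second, I would exploit the hierarchical combinatorics of Sturmian sequences. Writing $\alpha=[a_1,a_2,\dots]$ with convergents $p_k/q_k$, the characteristic sequence decomposes into words $w_k$ satisfying $w_{k+1}=w_k^{a_{k+1}}w_{k-1}$ (so that $q_{k+1}=a_{k+1}q_k+q_{k-1}$). Letting $M_k$ denote the transfer matrix accumulated over the block $w_k$, this word rule becomes the matrix recursion $M_{k+1}=M_{k-1}M_k^{a_{k+1}}$.

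Third, I would convert this into trace dynamics. Using the $\mathrm{SL}(2)$ identity $\mathrm{tr}(AB)+\mathrm{tr}(AB^{-1})=\mathrm{tr}(A)\mathrm{tr}(B)$ together with the Chebyshev representation $B^{m}=U_{m-1}(\tfrac12\mathrm{tr}\,B)\,B-U_{m-2}(\tfrac12\mathrm{tr}\,B)\,I$ valid for $B\in\mathrm{SL}(2)$, one checks that the triple of half-traces $\big(\tfrac12\mathrm{tr}\,M_k,\ \tfrac12\mathrm{tr}\,M_{k-1},\ \tfrac12\mathrm{tr}\,(M_kM_{k-1}^{-1})\big)$ evolves precisely under $T_{a_{k+1}}$, the Chebyshev polynomials entering exactly because of the power $M_k^{a_{k+1}}$. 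A base-case computation then fixes the starting triple: $\tfrac12\mathrm{tr}\,A_1=\tfrac{E-\lambda}{2}$, $\tfrac12\mathrm{tr}\,A_0=\tfrac{E}{2}$, and a direct multiplication gives $A_1A_0^{-1}=\left(\begin{smallmatrix}1&-\lambda\\0&1\end{smallmatrix}\right)$, whence $\tfrac12\mathrm{tr}\,(A_1A_0^{-1})=1$. Thus the orbit in the statement is, under this identification, the sequence of renormalized block half-traces.

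Finally --- and this is where I expect the real work to lie --- I would establish that boundedness of this orbit is equivalent to $E\in\sigma_{\lambda,\alpha}$. For the implication that an unbounded orbit forces $E$ into the resolvent set, I would invoke the dynamical escape lemma for the trace map: once the orbit leaves a fixed bounded region in the relevant configuration, the norms $\|M_k\|$ grow exponentially and uniformly, giving an exponential dichotomy for the cocycle, after which a Combes--Thomas / exponential-decay-of-the-Green's-function argument places $E$ in a spectral gap. For the converse, I would realize $\sigma_{\lambda,\alpha}$ as the Hausdorff limit of the $q_k$-periodic approximant spectra $\{E:|\tfrac12\mathrm{tr}\,M_k(E)|\le1\}$ and apply Schnol/subordinacy theory to produce a polynomially bounded generalized eigenfunction whenever the transfer-matrix norms remain controlled, so that $E\in\sigma_{\lambda,\alpha}$. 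The main obstacle is the quantitative two-sided link between the trace values and the norms $\|M_k\|$, made uniform across all continued-fraction levels: when the partial quotients $a_k$ are large the blocks $w_k$ are long, and controlling the growth throughout these long blocks rather than only at the endpoints $q_k$ is exactly what forces the Chebyshev bookkeeping and separates the general Sturmian case from the Fibonacci case.
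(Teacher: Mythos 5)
This statement is quoted in the paper as a known result with the citation [BIST, D]; the paper itself gives no proof, so the only meaningful comparison is with the argument in those references. Your outline reproduces that argument essentially verbatim: the transfer-matrix reformulation, the Sturmian block recursion $M_{k+1}=M_{k-1}M_k^{a_{k+1}}$ induced by $w_{k+1}=w_k^{a_{k+1}}w_{k-1}$, the Cayley--Hamilton/Chebyshev identity turning this into the trace maps $T_{a_{k+1}}$ with initial triple $\left(\frac{E-\lambda}{2},\frac{E}{2},1\right)$ (your computation $\frac12\mathrm{tr}\,(A_1A_0^{-1})=1$ is the right check), and the two-sided spectral equivalence via the escape lemma in one direction and periodic approximants plus Schnol/subordinacy in the other. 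You correctly identify where the genuine work lies (the uniform escape/growth dichotomy across all continued-fraction levels), so the proposal is a faithful reconstruction of the cited proof rather than a new route.
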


In other words, if we consider the line 
$$L_{\lambda}:=\left\{\left(\frac{E-\lambda}{2}, \frac{E}{2}, 1\right) : E\in\mathbb{R}\right\}\subset S_{\lambda}$$
and set 
$$\mathcal W^s_\lambda(\alpha) := \left\{ \boldsymbol x \in S_\lambda : (T_{a_n}\circ \cdots \circ T_{a_1} (\boldsymbol x))_{n\in\mathbb{N}} \ \text{is bounded} \right\},$$
we have that the spectrum $\sigma_{\lambda,\alpha}$ is affine equivalent to $L_{\lambda}\cap \mathcal W^s_\lambda(\alpha)$.

From \cite{Ca}, we know that each trace map $T_k\restriction_{S_\lambda}$ is hyperbolic on its non-wandering set for all $\lambda>0$. This was verified in a small range of $\lambda$ in \cite{DG1} for the trace map $T_1\restriction_{S_\lambda}$, and an explicit cone field was constructed in a neighborhood of such non-wandering set to establish this hyperbolicity. 

Recently, a common invariant cone field was constructed for all trace maps \cite{GJK}. We wish to use similar methods to those of Theorem \ref{Main Foliation Theorem} to prove the following statement: 
\begin{problem}\label{NS Trace Map Foliation}
    Prove that there is a $\delta>0$ such that for $0<\lambda<\delta $ and any irrational $\alpha=[a_1,a_2,\dots]\in(0,1)$, the set $\mathcal W^s_\lambda(\alpha) $ is a collection of one-dimensional smooth curves that can be included into a $C^1$ foliation. 
\end{problem}

We now demonstrate how Problem \ref{NS Trace Map Foliation} can be useful in proving dimensional results about $\sigma_{\lambda,\alpha}$.

Let us consider two irrationals $\alpha=[a_1, a_2, \ldots]$ and $\alpha'=[a_2, a_3, \ldots]$. 
A $C^1$ foliation that contains the leaves of $W_{\lambda}^s\left(\alpha'\right)$ intersects $L_{\lambda}$ transversally, and hence
$$\sigma_{\lambda,\alpha}\simeq L_{\lambda}\cap \mathcal W_{\lambda}^s\left(\alpha\right)=T_{a_1}\left(L_{\lambda}\right)\cap \mathcal W_{\lambda}^s\left(\alpha'\right)$$ and $$\sigma_{\lambda,\alpha'}\simeq L_{\lambda}\cap \mathcal W_{\lambda}^s\left(\alpha'\right).$$ If the leaves of $W^s_{\lambda}\left(\alpha'\right)$ are smooth and part of a $C^1$ foliation, then the holonomy map 
$$\theta: L_{\lambda}\cap \mathcal W_{\lambda}^s\left(\alpha'\right)\rightarrow T_{a_1}\left(L_{\lambda}\right)\cap \mathcal W_{\lambda}^s\left(\alpha'\right)$$
would have a $C^1$ extension. This would imply that $\sigma_{\lambda,\alpha}$ and $\sigma_{\lambda,\alpha'}$ are diffeomorphic as Cantor sets, and hence have the same Hausdorff dimension. Through these techniques, we would be able to deduce 
\begin{problem}\label{first open}
    Suppose $\lambda\in(0,\delta)$ and $\alpha=[a_1, a_2, \ldots]$ and $\beta=[b_1, b_2, \ldots]$ are irrationals such that for some $k\in \mathbb{Z}$ and all large enough $i\in \mathbb{N}$, we have $b_{i+k}=a_i$. Prove that $\sigma_{\lambda, \alpha}$ and $\sigma_{\lambda,\beta}$ are diffeomorphic (as Cantor sets).  In particular, they have the same Hausdorff dimension.
\end{problem}

Hence, we would have that the map $\alpha\mapsto \text{dim}_H(\sigma_{\lambda, \alpha})$ would be invariant under the Gauss map $\mathcal G:[0,1]\setminus{\mathbb{Q}}\rightarrow [0,1]\setminus{\mathbb{Q}}$ via
$$\mathcal G(x)=\frac{1}{x}-\left \lfloor{\frac{1}{x}}\right \rfloor ,$$
and by ergodicity of $\mathcal G$, this would prove
\begin{conjecture}
    For $\lambda>0$ sufficiently small, $\alpha\mapsto \dim_{H}\sigma_{\lambda,\alpha}$ is $\alpha$-a.e. constant with respect to the Gauss measure. 
\end{conjecture}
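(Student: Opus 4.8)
The plan is to reduce the conjecture to two ingredients: the invariance of the dimension function under the Gauss map, which is supplied (conditionally) by Problem \ref{first open}, and the ergodicity of the Gauss map. Fix $\lambda\in(0,\delta)$ and write $D(\alpha):=\dim_H\sigma_{\lambda,\alpha}$ for irrational $\alpha\in(0,1)$. The key observation is that $\mathcal G$ acts on continued fraction expansions exactly as the one-sided shift, $\mathcal G([a_1,a_2,a_3,\dots])=[a_2,a_3,\dots]$. Thus the statement to be proved is precisely that $D$ is $\mu$-almost everywhere constant, where $\mu$ is the Gauss measure $d\mu=\frac{1}{\ln 2}\frac{dx}{1+x}$, which is equivalent to Lebesgue measure, so the conclusion also yields the Lebesgue-a.e.\ constancy asserted in Conjecture \ref{main conjecture}.

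First I would establish that $D\circ\mathcal G=D$ pointwise on the irrationals. Given $\alpha=[a_1,a_2,\dots]$, set $\beta=\mathcal G(\alpha)=[a_2,a_3,\dots]$, so that $\beta=[b_1,b_2,\dots]$ with $b_i=a_{i+1}$; taking $k=-1$ gives $b_{i+k}=b_{i-1}=a_i$ for all large $i$, which is exactly the hypothesis of Problem \ref{first open}. That problem (whose proof rests on Problem \ref{NS Trace Map Foliation}, and ultimately on the foliation theory of Theorem \ref{Main Foliation Theorem} applied to the trace-map cocycle together with the common invariant cone field of \cite{GJK}) yields that $\sigma_{\lambda,\alpha}$ and $\sigma_{\lambda,\beta}$ are $C^1$-diffeomorphic as Cantor sets. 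Since a $C^1$ diffeomorphism between compact sets is bi-Lipschitz and bi-Lipschitz maps preserve Hausdorff dimension, we obtain $D(\alpha)=D(\mathcal G(\alpha))$ for every irrational $\alpha$.

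Next I would invoke the classical fact that $\mathcal G$ preserves $\mu$ and is ergodic (indeed exact) with respect to it. Granting that $D$ is $\mu$-measurable, the invariance $D\circ\mathcal G=D$ together with ergodicity forces $D$ to equal a constant $\mu$-almost everywhere, which is the conjecture. It is worth noting that the invariance holds on every orbit, so if $D$ were known to be continuous, density of Gauss orbits would already give global constancy; it is precisely the absence of a known modulus of continuity for $\alpha\mapsto\dim_H\sigma_{\lambda,\alpha}$ at small $\lambda$ that forces the passage to the measure-theoretic statement through ergodicity.

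The main obstacle is twofold. The deeper half is supplying Problems \ref{NS Trace Map Foliation} and \ref{first open}: one must verify that the non-stationary trace-map dynamics on the surfaces $S_\lambda$ genuinely fits the hypotheses of Theorem \ref{Main Foliation Theorem}, in particular the uniform $C^2$/$C^3$ bounds along the orbit built from the common cone field of \cite{GJK}, so that the stable sets $\mathcal W^s_\lambda(\alpha)$ embed into a $C^1$ foliation and the resulting holonomy between $L_\lambda$ and $T_{a_1}(L_\lambda)$ is a $C^1$ Cantor-set diffeomorphism. The more self-contained but still delicate half is the measurability of $D$, which is needed before ergodicity can be applied. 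I would establish this via Bowen's formula, expressing $\dim_H\sigma_{\lambda,\alpha}$ as the zero of a pressure equation for a H\"older potential attached to the hyperbolic trace-map dynamics, and verifying that this zero depends Borel-measurably (and, on suitable cylinder sets of digit sequences, even continuously) on $\alpha$; any Borel invariant function then descends to an a.e.\ constant by ergodicity of $\mathcal G$.
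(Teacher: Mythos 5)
Your proposal follows exactly the route the paper itself sketches for this conjecture: it is stated as a conditional consequence of Problem \ref{first open} (tail-equivalent continued fraction expansions give diffeomorphic spectra, hence $\dim_H\sigma_{\lambda,\alpha}$ is invariant under the Gauss map), combined with ergodicity of the Gauss map to conclude a.e.\ constancy. Your additional remarks on measurability of the dimension function and on the bi-Lipschitz invariance of Hausdorff dimension are correct refinements of the same argument, not a different approach.
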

This conjecture would immediately imply Conjecture \ref{c: almost sure constancy}, for small $\lambda$.
This problem was originally conjectured by J. Bellissard and was first solved for $\lambda\geq24$ in \cite{DG2015} and this result was expanded on in \cite{CQ}. However, these work use the methods of periodic approximations, which traditionally have not been applicable for all ranges of $\lambda$. In addition, Problem \ref{NS Trace Map Foliation} can be helpful in investigating the following conjecture:

\begin{conjecture}
  For any irrational $\alpha\in(0,1)$, we have
    $$\lim\limits_{\lambda\rightarrow 0} \dim_H\sigma_{\lambda, \alpha}=1.$$
\end{conjecture}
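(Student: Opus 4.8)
The plan is to realize $\sigma_{\lambda,\alpha}$ as a non-stationary hyperbolic Cantor set and to show that its transverse expansion rate degenerates to $1$ as $\lambda\to 0$, forcing the dimension up to $1$. Concretely, using the affine equivalence $\sigma_{\lambda,\alpha}\simeq L_\lambda\cap\mathcal W^s_\lambda(\alpha)$, I would first invoke the common invariant cone field for the trace maps $T_k\restriction_{S_\lambda}$ constructed in \cite{GJK}, so that the sequence $(T_{a_n})$ satisfies a common cone condition in the sense of Section \ref{background and preliminaries}, and the resolution of Problem \ref{NS Trace Map Foliation} (via Theorem \ref{Main Foliation Theorem}) provides a $C^{1+\text{H\"older}}$ foliation containing the leaves of $\mathcal W^s_\lambda(\alpha)$. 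The point of the smooth foliation here is quantitative: $C^{1+\text{H\"older}}$ holonomies enjoy bounded distortion, so the slice $L_\lambda\cap\mathcal W^s_\lambda(\alpha)$ can be analyzed through a non-stationary Bowen/pressure equation whose unknown is $\dim_H\sigma_{\lambda,\alpha}$, with the covering estimates controlled up to a multiplicative constant that is uniform across scales and along the orbit.

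The heart of the argument is the $\lambda\to 0$ asymptotics of the hyperbolicity. At $\lambda=0$ the surface $S_0$ carries the free Laplacian spectrum, and $L_0\cap\mathcal W^s_0(\alpha)=[-2,2]$ has dimension $1$; the relevant non-wandering behaviour along $L_\lambda$ becomes weakly neutral. I would track the expansion constant $\eta(\lambda)>1$ and the transverse contraction along the unstable direction as functions of $\lambda$, and show that the contraction rates per symbol tend to $1$ as $\lambda\to 0$ while the branching grows compatibly, so that the zero $d(\lambda)$ of the non-stationary pressure equation satisfies $d(\lambda)\geq 1-\varepsilon(\lambda)$ with $\varepsilon(\lambda)\to 0$. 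A lower bound of this form can also be produced directly by a mass distribution (Frostman) argument: build a measure on an explicit Cantor subcollection of bands of $L_\lambda\cap\mathcal W^s_\lambda(\alpha)$ and use bounded distortion, supplied by the $C^{1+\text{H\"older}}$ holonomies, to bound its local dimension from below by $1-\varepsilon(\lambda)$.

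The step I expect to be the main obstacle is making these estimates uniform in $\alpha$, i.e. over all sequences $(a_n)$ of partial quotients, which are unbounded. When a symbol $a_n$ is large, $T_{a_n}$ is a high power with strong expansion, so a naive per-symbol bound fails and one must instead measure hyperbolicity per unit of dynamical time along orbit segments of comparable transverse contraction. The common cone field of \cite{GJK} is precisely what lets the hyperbolicity constants be taken independent of the symbol, and this uniformity is what must be converted into an error $\varepsilon(\lambda)$ independent of $\alpha$. Assembling the non-stationary thermodynamic formalism with these uniform constants, and verifying that the degeneration of hyperbolicity at $\lambda=0$ is uniform over the portion of $S_\lambda$ meeting the spectrum (so that one cannot exploit continuity of dimension at the degenerate parameter directly, but must control it quantitatively), is the technical crux. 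Once these are in place, letting $\lambda\to 0$ yields $\dim_H\sigma_{\lambda,\alpha}\to 1$ for every irrational $\alpha$.
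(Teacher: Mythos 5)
You should first note the status of the statement itself: the paper does not prove it. It appears in Appendix \ref{Applications} as a conjecture, and the surrounding text explicitly records that the limit is known only when $\alpha$ is of bounded type \cite{L} and is \emph{open} for general irrational $\alpha$. So there is no proof in the paper to compare against, and a complete argument along your lines would be a new theorem. Judged on its own terms, your proposal has a concrete gap at its very first step: you invoke ``the resolution of Problem \ref{NS Trace Map Foliation} (via Theorem \ref{Main Foliation Theorem})'' as if the common cone field of \cite{GJK} plus Theorem \ref{Main Foliation Theorem} delivered the $C^{1+\text{H\"older}}$ foliation. It does not. Theorem \ref{Main Foliation Theorem} requires, besides the common cone condition, that the maps and their inverses be \emph{uniformly bounded} in $C^2$ (or $C^3$), and for $\alpha$ of unbounded type the sequence $(T_{a_n})$ violates this: $T_k$ is built from Chebyshev polynomials of degree growing with $k$, so $\left\|T_{a_n}\right\|_{C^2}\to\infty$ along any subsequence of large partial quotients. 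Moreover, Section \ref{Non-example section} shows this hypothesis cannot simply be dropped: there exist sequences satisfying a common cone condition, with unbounded $C^2$ norms, whose non-stationary stable foliation fails to be $C^1$. The bounded-distortion holonomies that drive your pressure/Frostman scheme are therefore exactly the open content of Problem \ref{NS Trace Map Foliation} --- which is why the paper states it as a problem rather than a corollary.

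Your third paragraph correctly identifies uniformity over unbounded symbol sequences as the main obstacle, but the proposal then resolves it by fiat (``once these are in place\ldots''), and that is precisely where any actual proof would have to live: per-symbol estimates degenerate for large $a_n$, and no $\alpha$-uniform distortion constant is currently available. Two further cautions. First, the quantitative $\lambda\to 0$ analysis is not a routine perturbation: even for the single Fibonacci symbol it required the delicate asymptotics of \cite{DG1, dg3}, and the bounded-type case in \cite{L} works by reducing to non-stationary Anosov sequences with \emph{uniformly bounded} norms --- the very uniformity that fails for general $\alpha$. Second, your heuristic mechanism is off: as $\lambda\to 0$ the dynamics near $\mathbb{S}_0=S_0\cap[-1,1]^3$ does not become ``weakly neutral,'' since each $T_a\restriction_{\mathbb{S}_0}$ is semiconjugate to a hyperbolic toral map; the dimension tends to $1$ not because contraction rates per symbol tend to $1$, but because the hyperbolic set fattens toward the full surface as the coupling vanishes, while the genuine analytic difficulty is the loss of distortion control for large symbols and near the singularities of the semiconjugacy. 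As a research plan your outline points in a sensible direction, but as a proof it is conditional on two open inputs: Problem \ref{NS Trace Map Foliation} for unbounded type, and $\alpha$-uniform asymptotics of the pressure zero.
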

This problem has recently been solved for the case when $\alpha$ is an irrational of bounded type in \cite{L}, but is open for general $\alpha$. For the set $\mathbb S_0:=S_0\cap [-1,1]^3$, each restricted trace map $T_a\restriction_{\mathbb S_0}$ is semiconjugate to a linear hyperbolic map on $\mathbb T^2$. Using this fact, the techniques used in \cite{L} reduced to analyzing dimension results for a sequence of Anosov maps satisfying a common cone condition. This work influenced the choice of setting addressed in Theorem \ref{Main Foliation Theorem}, as we believe it can be useful in answering questions regarding the dimension of $\sigma_{\lambda,\alpha}$ when $\lambda\approx 0$. Some other questions one might tackle are the following:
\begin{problem}
    What can be said about the regularity of $\dim_H \sigma_{\lambda, \alpha}$ as a function of $\lambda>0$?
\end{problem}

\section*{Acknowledgments}
I would like to thank Anton Gorodetski for his guidance and support, and Victor Kelptsyn for thoroughly reading through a first draft of this paper and offering suggestions for improvement. I would also like to thank Amie Wilkinson for suggesting some references, and Jonathan DeWitt for helpful mathematical conversations. I am also thankful to both anonymous referees for their insight and suggestions to help improve the quality of the text. This project was supported by NSF grant DMS-2247966 (PI: A. Gorodetski).








\footnotesize

\newcommand{\Addresses}{{
  \bigskip
  \footnotesize

  \textsc{Department of Mathematics, University of California, Irvine, CA 92697, USA}\par\nopagebreak
  \textit{E-mail address}: \texttt{lunaar1@uci.edu}

}}

\Addresses

\end{document}